\newtheorem{theorem}{Theorem}[section]
\newtheorem{lemma}[theorem]{Lemma}
\newtheorem{prop}[theorem]{Proposition}
\theoremstyle{definition}
\newtheorem{definition}[theorem]{Definition}
\newtheorem{example}[theorem]{Example}
\theoremstyle{remark}
\newtheorem{remark}[theorem]{Remark}
\begin{document}
\title[Divergent integrals and residues]
{Divergent integrals, residues of Dolbeault forms,
and asymptotic Riemann mappings}
\author{Giovanni Felder}
\address{Department of mathematics,
ETH Zurich, 8092 Zurich, Switzerland}
\email{felder@math.ethz.ch}
\author{David Kazhdan}
\address{Einstein Institute of Mathematics,
The Hebrew University of Jerusalem,
Jerusalem 91904, Israel}
\email{kazhdan@math.huji.ac.il}

\subjclass[2010]{Primary 32C30, 32C35; Secondary 30C35}
\begin{abstract}
  We describe the asymptotic behaviour and the dependence on the
  regularization of logarithmically divergent integrals of products of
  meromorphic and antimeromorphic forms on complex manifolds. Our
  formula is expressed in terms of residues of Dolbeault forms, a
  notion introduced in this paper.  The proof is based on a result on
  the asymptotic behaviour of Riemann mappings of small domains.
\end{abstract}
\maketitle
\section{Introduction}\label{s-1}
Let $X$ be a compact complex manifold of dimension $n$ and
$D\subset X$ a smooth hypersurface. We are interested in
regularizations of divergent integrals of the form
\[
  \frac1{2\pi i}\int_X\alpha\wedge\bar\beta,\quad \alpha\in
  \Gamma(X,\Omega^n_X(mD)),\quad\beta\in\Gamma(X,\Omega^n_X(D)).
\]
Here $\Gamma(X,\Omega^p_X(mD))$ denotes the space of meromorphic
$p$-forms on $X$  whose poles are on $D$ and are of order at most
$m$.  Integrals of this type arise in perturbative superstring theory,
see \cite{Witten}: the terms in the perturbation series for
superstring amplitudes are conditionally convergent improper integrals
of products of holomorphic times antiholomorphic differential forms on
moduli spaces of supercurves.  They thus depends on the way one takes
the limit and it is important to understand this dependence. We
consider here the basic case of an integral of this kind, with only
one divisor component.

The way to make sense of integrals of the form above is to introduce a
regularization and study the asymptotic behaviour as the
regularization is removed. Specifically the regularization consists of
replacing the integral over $X$ by the integral over the complement of
a suitable family of tubular neighbourhoods $N_\epsilon(D)$ of $D$
shrinking to zero size as the regularization parameter $\epsilon>0$
tends to zero. It turns out that the integral behaves as
$I_0\log\epsilon+I_1+O(\epsilon)$ as $\epsilon\to 0$, with $I_0$
independent of the choice of family of neighbourhoods.  In
applications to superstring theory the integral is ``finite'' in the
sense that $I_0$ actually vanishes, so that one can define the
integral as $I_1$.  However $I_1$ depends on the choice of
regularization and it is important to understand this dependence. The
aim of this paper is to describe the divergent term and dependence on
the choice of regularization, generalizing calculations of
\cite{Witten}, who considered the case $m=1$. More precisely, we
consider regularizations defined by a {\em cut-off function}
$\lambda$, a smooth positive function on $X\smallsetminus D$ such that
$\lambda/|f|$ extends smoothly to $D$ for any local equation $f=0$
defining $D$. For example, one can take $\lambda$ to be the geodesic
distance to $D$ for some Hermitian metric on $X$. Then for small
$\epsilon>0$, $N_\epsilon(D)=\{x\in X:\lambda(x)<\epsilon\}$ is a
tubular neighbourhood of $D$ and we define the regularized integral as
the integral over its complement.

The first result is that the divergent part can be written in terms of
residues
\begin{equation}\label{e-1}
  \frac {1} {2\pi i} \int_{\lambda\geq\epsilon}
  \alpha\wedge\bar\beta = 2(-1)^{n-1}\log\,\epsilon \int_D\mathrm{Res}\,\alpha
  \wedge
  \overline{\mathrm{Res}\,\beta}+I_1(\lambda)+O(\epsilon),\quad
  (\epsilon\to0).
\end{equation}
Here $\mathrm{Res}$ is the Leray residue, sending de Rham cohomology
classes on $X\smallsetminus D$ to de Rham cohomology classes on
$D$. For meromorphic closed forms with first order poles on $D$, such
as $\beta$, it reduces to the Poincar\'e residue, which is defined as
a holomorphic differential form on $D$, not just a cohomology class.
It follows in particular that the coefficient of $\log\epsilon$ only depends
on the de Rham cohomology classes of the closed forms $\alpha$ and $\beta$. 

To describe the dependence of the finite part $I_1(\lambda)$ on the
choice of $\lambda$ we notice that any two cut-off functions differ
by multiplication by $\exp\varphi$ for some smooth real-valued function
$\varphi$. We find:
\begin{equation}\label{e-2}
    I_1(e^\varphi\lambda)=I_1(\lambda)- 2(-1)^{n-1}\int_D
    \mathrm{Res}_\partial(\varphi\alpha)\wedge\overline{\mathrm{Res}\,\beta},
\end{equation}
see Theorem \ref{t-3}. 
Here $\mathrm{Res}_\partial$ is the residue of a Dolbeault form, a notion
we introduce in this paper. It maps smooth forms $\omega$ of type $(n,0)$
on $X\smallsetminus D$ with poles on $D$ to classes of forms
on $D$ of type $(n-1,0)$ on $D$ modulo the image of $\partial$ (it is 
more convenient in this setting to consider $\partial$ as the Dolbeault
differential; it is related to the usual convention by complex conjugation).

A remarkable feature of \eqref{e-2} is that the variation of the integral is
linear in $\varphi$ and is independent of $\lambda$.

Our formulas are a special case of a more general formula where
one takes $\alpha$ to be a not necessarily closed
$C^\infty$ form of type $(n,0)$ on $X\smallsetminus D$
and whose support has compact closure in $X$ (we don't need $X$ to be
compact in this case). The formulas \eqref{e-1}, \eqref{e-2} hold except
that one has to replace $\mathrm{Res}\,\alpha$ in \eqref{e-1} by
$\mathrm{Res}_\partial\alpha$. This more general formula can be checked
locally via a partition of unity and the formula for meromorphic $\alpha$
follows as a corollary.

The proof of these statements is based on a result on the asymptotic behaviour
of Riemann mappings for small domains, which might be of independent
interest. Let $X=\mathbb C$ and $D=\{0\}$ and let $\lambda$ be a
cut-off function such that $\lambda^2$ is real analytic. Then, for
small $\epsilon>0$, $\lambda(z)<\epsilon$ defines a simply connected
domain containing the origin, so, by the Riemann mapping theorem, there
is a unique biholomorphic map $z\mapsto w=f_\epsilon(z)$ onto the disk
$|w|<\epsilon$ sending $0$ to $0$ and such that $f_\epsilon'(0)>0$. We
show (see Theorem \ref{t-1}) that the family of maps $f_\epsilon(z)$
converges as $\epsilon\to 0$ to a univalent map defined in a
neighbourhood of the origin and is in fact the restriction of a
holomorphic function of two variables $\epsilon,z$ defined on a
polydisk around
$(0,0)$. In Theorem \ref{t-2} we give an explicit formula for
the limiting map.

This construction is used to prove \eqref{e-1}, \eqref{e-2} by 
first reducing to a local calculation and then reducing
to the case where $\lambda=|f|$ for a local equation
$f=0$ of $D$.

The paper is organized as follows. In Section \ref{s-2} we introduce a
notion of residue for $C^\infty$ differential forms of type $(p,0)$ on
a complex manifold with poles on a smooth hypersurface. The divergent
part of the integral and the dependence on the cut-off of the finite
part are expressed in terms of this residue in Section \ref{s-3}.
Our result on the asymptotic behaviour of Riemann mappings for small domains 
is proved in Section \ref{s-4}. Section \ref{s-5} contains the proof
of the formulas for divergent integrals presented in Section \ref{s-3}

\subsection*{Acknowledgments}
We thank Curtis McMullen for discussion and suggestions on the topic of
Section \ref{s-4}.
G.F. was supported in part by the National Centre of Competence in
Research ``SwissMAP --- The Mathematics of Physics'' of the Swiss
National Science Foundation.

\section{Residues of Dolbeault forms}\label{s-2}
Let $X$ be an $n$-dimensional complex manifold with sheaf
$\mathcal O_X$ of germs of holomorphic functions and let $D\subset X$
be a smooth hypersurface. Thus every point of $D$ has an open
neighbourhood $U$ so that the ideal of $\mathcal O_X$ of functions
vanishing on $D\cap U$ is generated by a function $f\in\mathcal O_X(U)$.
We call $f$ a {\em local equation} for $D$. Any two local equations differ
by multiplication by a nowhere vanishing holomorphic function.

\subsection{Residues of smooth differential forms}
A $C^\infty$ differential form $\alpha$ on $X\smallsetminus D$ is said
to have a pole of order $\leq m$ if around every point of $D$, $f^m\alpha$
extends smoothly to $X$ for a local equation $f$ for $D$. If $\alpha$ 
is closed and has a pole of order $\leq 1$ then locally 
$\alpha=df/f\wedge\sigma+\tau$ for some smooth forms $\sigma,\tau$. Then
$\sigma|_D$ is closed, globally defined, and independent of choices; it
is called {\em Poincar\'e residue} $\mathrm{Res}\,\alpha$,
see \cite{Leray}, p.~83. If $\alpha$ is a holomorphic closed form then 
$\mathrm{Res}\,\alpha$ is also holomorphic.
The {\em Leray residue} of a closed differential
form $\alpha$ on $X\smallsetminus D$ is a de Rham cohomology class
on $D$. It is defined to be the Poincar\'e 
residue for forms with first order pole and in general it is the
residue of any form with first order pole in the de Rham cohomology
class of $\alpha$, \cite{Leray}, Th\'eor\`eme 1. These notions were
further developed in \cite{Griffiths, Deligne}. In this
paper we develop part of the theory for the (conjugate) Dolbeault
complex replacing the de Rham complex.  

\subsection{Dolbeault forms}
Let $C^\infty_X$ be the sheaf of germs of smooth functions on $X$.
Let $\mathcal A_X^{r}=\oplus_{p+q=r} \mathcal A_X^{p,q}$ be the
decomposition of the $C^\infty_X$-module of smooth differential
$r$-forms into forms of type $(p,q)$. The de Rham differential on
$\mathcal A_X^{\bullet}=\oplus_{r=0}^{2n} \mathcal A_X^r$ decomposes
as $d=\partial+\bar\partial$ with
\[
  \partial(\mathcal A_X^{p,q})\subset \mathcal A_X^{p+1,q},\quad
  \bar\partial(\mathcal A_X^{p,q})\subset \mathcal A_X^{p,q+1}.
\]
Let $\mathcal A_X^{p,q}(*D)$ be the sheaf of smooth $(p,q)$-forms
$\alpha$ on $X\smallsetminus D$ so that $f^m\alpha$ extends to $X$ for
some $m$ and local equation $f$ for
$D$.

Similarly, let $\mathcal A_X^{p,q}\langle D\rangle$ be the sheaf
of smooth logarithmic forms. By definition, sections of
$\mathcal A_X^{p,q}\langle D\rangle$ on an open set $U\subset X$ are
smooth $(p,q)$ forms $\alpha$ on $U\smallsetminus (U\cap D)$ such that
for any local equation $f$, both $f\alpha$ and $df\wedge\alpha$ are
regular on $U$. Both $\mathcal A_X^{p,q}(*D)$ and
$\mathcal A_X^{p,q}\langle D\rangle$ are double complexes with
differentials $\partial,\bar\partial$. The de Rham sheaves of
meromorphic forms $\Omega_X^\bullet(*D)$,
$\Omega_X^\bullet\langle D\rangle$ are subcomplexes of
$(\mathcal A_X^{\bullet,0}(*D),\partial)$,
$(\mathcal A_X^{\bullet,0}\langle D\rangle,\partial)$ respectively.

\begin{prop}\label{p-1}
  The inclusion map
  \[
  (\mathcal A_X^{\bullet,0}\langle D\rangle,\partial)\to (\mathcal
  A_X^{\bullet,0}(*D),\partial)
  \]
  is a quasi-isomorphism. The same holds if we take sections with
  compact support.
\end{prop}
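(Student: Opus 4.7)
The claim has two halves: the sheaf quasi-isomorphism, and its counterpart on compactly supported sections. My plan is to establish the sheaf statement by a direct local argument and then to deduce the compactly supported version from it. The latter is automatic: both $\mathcal{A}_X^{\bullet,0}(*D)$ and $\mathcal{A}_X^{\bullet,0}\langle D\rangle$ are $C^\infty_X$-modules, hence fine, hence acyclic for $\Gamma_c$, so the hypercohomology of $X$ with values in either complex is computed by its compactly supported sections and a quasi-isomorphism of sheaf complexes induces one on compactly supported section complexes. It therefore suffices to check the quasi-isomorphism on stalks; I will work in a coordinate neighbourhood of a point of $D$ with coordinates $(z_1,\dots,z_n)$ such that $D$ is cut out by $z_1=0$.

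Any germ $\alpha\in\mathcal{A}_X^{p,0}(*D)_x$ of pole order at most $m$ admits the unique decomposition
\[
\alpha = z_1^{-m}\bigl(\beta_1 + dz_1\wedge\beta_2\bigr),
\]
where $\beta_1$ and $\beta_2$ are smooth $(p,0)$- and $(p-1,0)$-forms containing no $dz_1$, respectively. A direct computation using $\partial(z_1^{-(m-1)}) = -(m-1)\,z_1^{-m}\,dz_1$ yields, for $m\ge 2$, the key identity
\[
\partial\!\left(-\frac{\beta_2}{(m-1)\,z_1^{m-1}}\right)
= \frac{dz_1\wedge\beta_2}{z_1^m}-\frac{\partial\beta_2}{(m-1)\,z_1^{m-1}}.
\]
This shows that the $dz_1$-containing part of $\alpha$ can always be absorbed by a $\partial$-exact form at the cost of introducing an error of pole order at most $m-1$.

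The central claim is that $H^p$ of the quotient complex $Q^\bullet=\mathcal{A}_X^{\bullet,0}(*D)/\mathcal{A}_X^{\bullet,0}\langle D\rangle$ vanishes on stalks; equivalently, any $\alpha$ with $\partial\alpha\in\mathcal{A}_X^{p+1,0}\langle D\rangle$ equals $\partial\eta+\tilde\alpha$ for some $\eta\in\mathcal{A}_X^{p-1,0}(*D)_x$ and some $\tilde\alpha\in\mathcal{A}_X^{p,0}\langle D\rangle_x$. I will prove this by induction on the pole order $m$ of $\alpha$. The driving observation is that $z_1^{m+1}\,\partial\alpha$ is a smooth form whose restriction to $D$ equals $-m\,dz_1\wedge\beta_1|_D$, and the hypothesis that $z_1\,\partial\alpha$ extends smoothly across $D$ (implied by $\partial\alpha$ being logarithmic) forces this restriction to vanish; since $\beta_1$ has no $dz_1$ component, this gives $\beta_1|_D=0$ and so $\beta_1 = z_1\beta_1'$ with $\beta_1'$ smooth. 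For $m\ge 2$, substituting this back and applying the identity above exhibits $\alpha$ as a $\partial$-exact form plus a form of pole order at most $m-1$, which the inductive hypothesis handles. For the base case $m=1$, the same vanishing gives $\alpha = \beta_1' + z_1^{-1}\,dz_1\wedge\beta_2$; then both $z_1\alpha$ and $dz_1\wedge\alpha = dz_1\wedge\beta_1'$ are smooth, so $\alpha$ is itself logarithmic. The main obstacle is precisely that the hypothesis on $\partial\alpha$ is weaker than $\partial\alpha=0$, so no standard Poincar\'e-type lemma for $(\mathcal{A}_X^{\bullet,0},\partial)$ applies directly; the resolution is that the single most singular coefficient of $z_1\,\partial\alpha$ already encodes precisely the vanishing of $\beta_1|_D$ needed either to drop the pole order by one ($m\ge 2$) or to certify the logarithmic condition directly ($m=1$).
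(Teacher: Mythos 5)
Your overall strategy --- work on stalks, decompose $\alpha = z_1^{-m}(\beta_1 + dz_1\wedge\beta_2)$ with $\beta_1,\beta_2$ free of $dz_1$, and induct on the pole order by peeling off $\partial$-exact pieces --- is essentially the same mechanism as the paper's proof, which organizes the same reduction via an increasing filtration $F_j^\bullet$ by order of singularity and shows the associated graded pieces are acyclic for $j\ge1$. Your derivation of the compactly supported case from fineness is also a perfectly valid (and slightly slicker) alternative to the paper's remark that the same filtration argument goes through verbatim for compactly supported sections.

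However, there is a genuine gap in the inductive step. You compute $z_1^{m+1}\partial\alpha|_D = -m\,dz_1\wedge\beta_1|_D$, deduce $\beta_1|_D = 0$ from the smoothness of $z_1\,\partial\alpha$, and then assert ``and so $\beta_1 = z_1\beta_1'$ with $\beta_1'$ smooth.'' That implication is false in the $C^\infty$ category: a smooth form vanishing on $D=\{z_1=0\}$ need not be divisible by $z_1$. For instance $\beta_1 = \bar z_1\,dz_2$ vanishes on $D$, yet $\beta_1/z_1=(\bar z_1/z_1)\,dz_2$ does not extend smoothly. The same issue undermines the base case $m=1$, which requires precisely this divisibility to conclude that $\alpha$ is logarithmic. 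The fix is to use the \emph{full} smoothness of $z_1\,\partial\alpha$ rather than only the vanishing of its leading coefficient on $D$. Writing $\partial\beta_1 = dz_1\wedge\beta_{11}+\beta_{12}$ with $\beta_{11},\beta_{12}$ free of $dz_1$, one has
\[
z_1\,\partial\alpha \;=\; z_1^{-m}\,dz_1\wedge\bigl(-m\beta_1 + z_1(\beta_{11}-\partial\beta_2)\bigr)\;+\;z_1^{-m+1}\beta_{12}.
\]
Since the projection onto the $dz_1$-component of a smooth $(p,0)$-form is again smooth, the hypothesis forces $z_1^{-m}\bigl(-m\beta_1 + z_1(\beta_{11}-\partial\beta_2)\bigr)$ to extend smoothly across $D$; calling this extension $\rho$ and solving gives $\beta_1 = z_1\cdot\frac1m\bigl((\beta_{11}-\partial\beta_2)-z_1^{m-1}\rho\bigr)$, so $\beta_1=z_1\beta_1'$ with $\beta_1'$ smooth as needed. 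With this correction the induction and the base case close properly, and the rest of your argument is sound.
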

\begin{proof} Let $F_j^\bullet$ be the subsheaf 
of $\mathcal A^{\bullet,0}_X(*D)$
consisting of sections $\alpha$ such that $f^{j+1}\alpha$ and 
$f^{j}df\wedge\alpha$
extend smoothly to $D$ for any local equation $f$. These subsheaves
form an increasing filtration
\[
\mathcal A_X^{\bullet,0}\langle D\rangle=F^\bullet_0\subset
F^\bullet_1\subset\cdots\subset\mathcal A^{\bullet,0}_X(*D)=\cup_jF_j.
\]
of subcomplexes of $\bar{\mathcal O}_X$-modules. Locally, the
associated graded $F_{j}/F_{j-1}$ consists of classes of sections
of the form
\[
\alpha =\frac{df}{\ f^{j+1}}\wedge\sigma+\frac{1}{f^j}\tau,
\]
where $\sigma$ and $\tau$ are defined modulo the ideal generated by
$f$ and $df$.  The differential sends
$(\sigma,\tau)$ to $(-\partial\sigma-j\tau,\partial\tau)$.
For $j\geq 1$, this implies that if $\alpha$ is a closed form
then $\tau\equiv-\partial\sigma/j\mod f,df$ and thus 
\[
\alpha=\partial\left(-\frac1{jf^j}\sigma\right)
\]
in $F_j/F_{j-1}$. Thus the cohomology of the associated graded 
is trivial except for $j=0$, where $F_0/F_{-1}=F_0$ is the subcomplex
of logarithmic forms. The same argument works for the subcomplex
of sections with compact support.
\end{proof}

\begin{prop}\label{p-Poincare}
  The Poincar\'e residue map on $(n,0)$ forms uniquely extends to a
  $C^\infty_X$-linear map of complexes of $\bar{\mathcal O}_X$-modules
  \[
    \mathrm{Res}\colon (\mathcal A^{\bullet,0}_X\langle
    D\rangle/\mathcal A^{\bullet,0}_X, \partial)\to (i_*\mathcal
    A^{\bullet,0}_D[-1],\partial)
  \]
  It restricts to a $C^\infty_X$-linear map of complexes of
  $\bar{\mathcal O}_X$-modules
  \[
    \mathrm{Res}\colon ({}^c\!\mathcal A^{\bullet,0}_X\langle
    D\rangle/{}^c\!\mathcal A^{\bullet,0}_X, \partial)\to (i_*{}^c\!\mathcal
    A^{\bullet,0}_D[-1],\partial)
  \]
  on the subcomplex of forms with compact support.
\end{prop}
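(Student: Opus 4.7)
The plan is to define $\mathrm{Res}$ sheaf-locally via the decomposition of a logarithmic form and then verify well-definedness, $C^\infty_X$-linearity, and the chain-map property. On a sufficiently small open $U\subset X$ with a local equation $f$ for $D$, any section $\alpha \in \mathcal A^{p,0}_X\langle D\rangle(U)$ admits a decomposition
\[
\alpha = \frac{df}{f}\wedge\sigma + \tau, \qquad \sigma \in \mathcal A^{p-1,0}_X(U),\ \tau \in \mathcal A^{p,0}_X(U);
\]
indeed, in holomorphic coordinates $z_1,\dots,z_n$ with $f=z_n$, the conditions that $f\alpha$ and $df\wedge\alpha$ be smooth force every first-order-pole coefficient to appear in a term containing $dz_n$, which one groups into the first summand. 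I would then set
\[
\mathrm{Res}\,\alpha := i^*\sigma \in \mathcal A^{p-1,0}_{D\cap U},
\]
where $i\colon D\cap U\hookrightarrow U$ denotes the inclusion.

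The crucial point is well-definedness. If $\alpha$ admits two such decompositions then $(df/f)\wedge(\sigma-\sigma')$ is smooth. Writing $\sigma-\sigma' = \rho_0 + dz_n\wedge\rho_1$ with $\rho_0,\rho_1$ not involving $dz_n$, the $dz_n\wedge\rho_1$ part contributes zero to $(dz_n/z_n)\wedge(\sigma-\sigma')$, so smoothness of $(dz_n/z_n)\wedge\rho_0$ forces each coefficient of $\rho_0$ to factor through $z_n$, and hence $\rho_0$ vanishes on $\{z_n=0\}$. Thus $i^*(\sigma-\sigma') = i^*\rho_0 = 0$. Independence of the local equation is automatic: if $f' = uf$ with $u$ a nowhere vanishing holomorphic function, then $df'/f' - df/f = du/u$ is smooth, so only $\tau$ changes. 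The local definitions therefore glue to a morphism of sheaves which vanishes on $\mathcal A^{\bullet,0}_X$ (take $\sigma = 0$) and descends to the claimed quotient.

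$C^\infty_X$-linearity follows from $g\alpha = (df/f)\wedge(g\sigma) + g\tau$ for smooth $g$, giving $\mathrm{Res}(g\alpha) = g|_D\cdot\mathrm{Res}\,\alpha$; in particular, the map is $\bar{\mathcal O}_X$-linear. Using $\partial(df/f)=0$, differentiating the decomposition yields $\partial\alpha = -(df/f)\wedge\partial\sigma + \partial\tau$, whence
\[
\mathrm{Res}(\partial\alpha) = -i^*(\partial\sigma) = -\partial(i^*\sigma) = -\partial(\mathrm{Res}\,\alpha),
\]
which is the chain-map identity with the standard sign $d_{K[-1]} = -d_K$ on the shifted complex $i_*\mathcal A^{\bullet,0}_D[-1]$. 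On a closed meromorphic logarithmic form with $\sigma,\tau$ holomorphic the definition $i^*\sigma$ reproduces the classical Poincaré residue; uniqueness of the extension is forced because locally $\mathcal A^{\bullet,0}_X\langle D\rangle/\mathcal A^{\bullet,0}_X$ is generated as a $C^\infty_X$-module by classes $(df/f)\wedge\eta$ with $\eta$ holomorphic, on which any $C^\infty_X$-linear extension of the Poincaré residue is prescribed. The compact-support variant needs no additional argument: supports of $\sigma,\tau$ can be chosen inside that of $\alpha$ via a partition of unity, and $i^*\sigma$ then has compact support in $D$.

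The main obstacle is the local algebraic lemma that smoothness of $(df/f)\wedge\rho$ with $\rho$ not involving $df$ forces $i^*\rho = 0$; once this is established, well-definedness, linearity, and the chain-map identity are formal consequences of the defining decomposition.
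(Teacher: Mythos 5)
Your proof is correct and follows essentially the same route as the paper's: define the residue on the quotient via the local decomposition $\alpha = (df/f)\wedge\sigma + \tau$, set $\mathrm{Res}\,\alpha = \sigma|_D$, and verify well-definedness, $C^\infty_X$-linearity, the chain-map identity with the sign from the shift, and uniqueness from the fact that the quotient is $C^\infty_X$-generated by the classical logarithmic generators. The only difference is that you spell out the well-definedness argument (the local algebraic lemma that smoothness of $(df/f)\wedge\rho$ with $\rho$ not involving $df$ forces $i^*\rho=0$) in detail, where the paper simply appeals to the classical case.
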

\begin{proof} 
  Logarithmic forms are locally of the form $\alpha= df/f\wedge\sigma$
  where $\sigma\in\mathcal A^{\bullet,0}_X$ and $f$ is a holomorphic
  function generating the ideal of $D$.  We define the residue as
  $\mathrm{Res}\,\alpha=\sigma|_D$. As in the classical case the
  residue is independent of the choice of $f$ and $\sigma$. By
  construction it is a map of $C^\infty_X$-modules and, since
  $\partial\alpha=- df/f\wedge\partial\sigma$, it commutes with the
  differential (taking into account that the shift functor $[-1]$
  involves a change of sign of the differential). It is clear that
  $\mathcal A^{\bullet,0}_X$ lies in the kernel of the residue
  map. This proves existence. As for uniqueness, let $z_1,\dots,z_n$
  be local coordinates on an open set $U$ such that $D\cap U$ is
  defined by $z_1=0$.  $\mathcal A^{\bullet,0}_X\langle D\rangle(U)$
  is an algebra over $C^\infty_X(U)$ generated by
  $dz_1/z_1,dz_2,\dots dz_n$ for which the residue is the classical
  Poincar\'e residue and uniqueness follows from
  $C^\infty_X$-linearity. It is clear that the residue maps forms with
  compact support to forms with compact support.
\end{proof}

Combining Prop.~\ref{p-1} with \ref{p-Poincare} we get maps of
complexes (with differential $\partial$)
\[
\mathcal A_X^{\bullet,0}(*D)\longleftarrow \mathcal A_X^{\bullet,0}\langle D\rangle
\stackrel{\mathrm{Res}}\longrightarrow i_*\mathcal A_D^{\bullet,0}[-1],
\]
where the first arrow is a quasi-isomorphism.

Since these sheaves are fine and thus acyclic for the global section functor,
we obtain morphisms of complexes 
\[
\Gamma(X,\mathcal A_X^{\bullet,0}(*D))\longleftarrow \Gamma(X,\mathcal A_X^{\bullet,0}\langle D\rangle)
\stackrel{\mathrm{Res}}\longrightarrow \Gamma(D,\mathcal A_D^{\bullet,0}[-1]),
\]
and the first arrow is still a quasi-isomorphism. Passing to cohomology
we obtain a map
\[
  \mathrm{Res}_\partial\colon H^\bullet(\Gamma(X,\mathcal
  A_X^{\bullet,0}(*D)),\partial)\to H^{\bullet}(\Gamma(D,\mathcal
  A_D^{\bullet,0}),\partial)[-1]
\]
 Note that since
$(\mathcal A_D^{\bullet,0},\partial)$ is, by Dolbeault's theorem, a
resolution of the sheaf $\bar{\mathcal O}_D$ of antiholomorphic
functions and is acyclic for the functor of global sections, we
have:
\[
  H^{\bullet}(\Gamma(D,\mathcal A_D^{\bullet,0}),\partial) \cong
  H^{\bullet}(D,\bar {\mathcal{O}}_D).
\]
We are mostly concerned with the case of top forms
$\mathcal A_X^{n,0}(*D)$, which are automatically
$\partial$-closed. 
\begin{definition}
  The {\em residue of top Dolbeault forms} $\mathrm{Res}_\partial$ is
  the composition
\[
  \mathrm{Res}_\partial\colon \Gamma(X,\mathcal A_X^{n,0}(*D)) \to
  \Gamma(D,\mathcal A^{n-1,0}_D)/\mathrm{Im}(\partial)\cong
  H^{n-1}(D,\bar{\mathcal O}_D)
\]
\end{definition} 
 
By construction the residue has the following properties:
\begin{prop}\label{p-properties}
\ 
\begin{enumerate}
\item[(i)] $\mathrm{Res}_\partial(\partial\gamma)=0$ for all $\gamma\in\Gamma(X,\mathcal A^{n-1,0}_X(*D))$.
\item[(ii)] $\mathrm{Res}_\partial$ vanishes on forms extending to smooth
forms on $X$.
\item[(iii)] The support of $\mathrm{Res}_\partial(\alpha)$ is contained
in $D\cap\mathrm{supp}(\alpha)$.
\item[(iv)] $\mathrm{Res}_\partial$ coincides with the Poincar\'e residue
on $(n,0)$-forms with simple pole on $D$.
\end{enumerate}
\end{prop}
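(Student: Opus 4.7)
The plan is to verify each of the four properties directly from the construction of $\mathrm{Res}_\partial$, which sends $\alpha\in\Gamma(X,\mathcal A_X^{n,0}(*D))$ to the class in $\Gamma(D,\mathcal A_D^{n-1,0})/\mathrm{Im}(\partial)$ of $\mathrm{Res}(\tilde\alpha)$, where $\tilde\alpha$ is any logarithmic representative of the cohomology class of $\alpha$ in the complex $(\Gamma(X,\mathcal A_X^{\bullet,0}(*D)),\partial)$, obtained by combining the quasi-isomorphism of Proposition~\ref{p-1} with the residue of Proposition~\ref{p-Poincare}.

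Properties (i), (ii), and (iv) are essentially formal. For (i): every $(n,0)$-form is automatically $\partial$-closed, so $\mathrm{Res}_\partial$ factors through the $n$-th cohomology of $\Gamma(X,\mathcal A_X^{\bullet,0}(*D))$; any form of the shape $\partial\gamma$ represents the zero class there and hence has zero image. For (ii): a smooth form is trivially logarithmic (take $\sigma=0$ in the local presentation $df/f\wedge\sigma+\tau$), so it serves as its own logarithmic representative, and the Poincar\'e residue of Proposition~\ref{p-Poincare} vanishes on such forms. For (iv): a top form with simple pole is itself logarithmic, since $df\wedge\alpha=0$ automatically in top degree, and the construction then reduces to taking $\tilde\alpha=\alpha$, so that $\mathrm{Res}_\partial(\alpha)=\mathrm{Res}(\alpha)$ is precisely the Poincar\'e residue.

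The main work, and the principal obstacle, is (iii), which requires the construction to be realized by support-preserving operations. I would unpack the proof of Proposition~\ref{p-1}: if $\alpha$ has pole of order $m\geq 2$ and is locally of the form $df/f^m\wedge\sigma+\tau/f^{m-1}+\cdots$, the inductive step subtracts $\partial(-\sigma/((m-1)f^{m-1}))$ and drops the pole order by one. The key observation is that if $\alpha$ vanishes on an open set $U$, then in any local trivialization all components $\sigma,\tau,\ldots$ also vanish on $U$, hence the explicit corrections stay supported in $\mathrm{supp}(\alpha)$. Patching the iterated local construction via a partition of unity subordinate to a cover of $X$ by charts adapted to $D$, one produces a logarithmic $\tilde\alpha$ and a primitive $\mu\in\Gamma(X,\mathcal A_X^{n-1,0}(*D))$ with $\alpha=\tilde\alpha+\partial\mu$ and both supports contained in $\mathrm{supp}(\alpha)$. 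Applying Proposition~\ref{p-Poincare} then yields a representative of $\mathrm{Res}_\partial(\alpha)$ supported in $D\cap\mathrm{supp}(\tilde\alpha)\subset D\cap\mathrm{supp}(\alpha)$. The bookkeeping to make the support preservation uniform across overlapping charts is the only technical subtlety, and it is eased by the $C^\infty_X$-linearity of all ingredients in the construction.
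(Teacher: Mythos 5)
The paper itself gives no explicit proof of this proposition---it simply asserts ``By construction the residue has the following properties''---so your proposal is filling a gap rather than mirroring or diverging from an existing argument. Your treatment of (i), (ii), and (iv) is correct and is exactly the content of the construction: (i) is the observation that $\mathrm{Res}_\partial$ on $(n,0)$-forms is a composition through $H^n$ of the $\partial$-complex and $\partial\gamma$ is a coboundary; (ii) is that smooth forms lie in the kernel of the sheaf map of Proposition~\ref{p-Poincare}; (iv) is that simple-pole $(n,0)$-forms are already logarithmic (for top degree, $df\wedge\alpha=0$ trivially), so no reduction is needed.

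For (iii), you correctly identify the one genuinely nontrivial point---that the reduction from $\mathcal A^{n,0}_X(*D)$ to $\mathcal A^{n,0}_X\langle D\rangle$ modulo $\mathrm{Im}\,\partial$ must be realized in a support-preserving way---and the explicit local step you describe (subtracting $\partial\bigl(-\sigma/((m-1)f^{m-1})\bigr)$) does preserve supports, since for a top form the coefficient $\sigma$ can be taken to vanish wherever $\alpha$ does. The only place your write-up is loose is the patching: the safe order of operations is to multiply $\alpha$ by the partition of unity \emph{first}, i.e.\ decompose $\alpha=\sum_i\rho_i\alpha$ and then carry out the iterated local reduction of each $\rho_i\alpha$ inside its chart $U_i$, yielding $\rho_i\alpha=\tilde\alpha_i+\partial\mu_i$ with $\tilde\alpha_i$ logarithmic and $\tilde\alpha_i,\mu_i$ supported in $\mathrm{supp}(\rho_i\alpha)\subset U_i\cap\mathrm{supp}\,\alpha$; summing gives a global logarithmic $\tilde\alpha$ and global $\mu$ with the required supports. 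If instead one reduces $\alpha|_{U_i}$ first and only then multiplies by $\rho_i$, the error term $\sum_i\partial\rho_i\wedge\mu_i$ reintroduces higher-order poles and $\tilde\alpha$ fails to be logarithmic. Your appeal to $C^\infty_X$-linearity is the right instinct, but it should be invoked at the level of $\alpha$, not of the local primitives.
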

\begin{example}\label{e-2.1}
Let $n=1$, $D=\{p\}$ a point, $z=x+iy$ 
a coordinate vanishing at $p$ and $m\geq1$. 
Then a local section $\alpha\in \mathcal A^{1,0}_X(mD)$ 
has the form $g(x,y)dz/z^{m}$ for some smooth function $g$ defined in a
neighbourhood of $(0,0)$. Then
\[
\mathrm{Res}_\partial\,\alpha=\frac1{(m-1)!}\,\partial_z^{m-1}g(0,0),\qquad
\partial_z=\frac12(\partial_x-i\partial_y). 
\]
Indeed, this holds by definition for $m=1$. The general case follows
by induction from 
\[
  g\frac {dz}{z^{j+1}} =
  \partial_zg\frac {dz}{jz^{j}}-\partial_z\left(\frac{g}{jz^j}\right)dz \equiv
  \partial_zg\frac {dz}{jz^{j}} \mod \mathrm{Im}\,\partial,\qquad j\geq1.
\]
\end{example}
\subsection{Comparison with the Leray residue}
There is a canonical map 
\[
\psi\colon H^p(D,\mathbb C)\to H^{p}(\Gamma(D,\mathcal A^{\bullet,0}_D),\partial)
\]
 from the de Rham cohomology: it sends the class of
a closed $p$-form $\alpha$ with Hodge decomposition
\[
\alpha=\alpha^{p,0}+\alpha^{p-1,1}+\cdots+\alpha^{0,p}
\]
to the class of its top component $\alpha^{p,0}$.
\begin{prop}\label{p-2.6}
$\mathrm{Res}_\partial(\alpha)=\psi(\mathrm{Res}\,\alpha)$ for all
$\alpha\in \Gamma(X,\Omega_X^n(*D))$.
\end{prop}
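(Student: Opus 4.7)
The plan is to reduce the statement to the logarithmic case, where it follows immediately from Prop.~\ref{p-properties}(iv), by producing two logarithmic representatives of $\alpha$ --- a smooth $(n,0)$-form computing $\mathrm{Res}_\partial\alpha$ and a $d$-closed smooth log form of mixed Hodge type computing $\mathrm{Res}\,\alpha$ --- and showing that their $(n,0)$ parts differ by a $\partial$-exact form in $\mathcal A^{n,0}_X(*D)$.

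First I would apply Prop.~\ref{p-1} at the level of global sections (justified by the fineness of the sheaves, as noted just before the definition of $\mathrm{Res}_\partial$) to obtain $\alpha'\in\Gamma(X,\mathcal A^{n,0}_X\langle D\rangle)$ and $\gamma\in\Gamma(X,\mathcal A^{n-1,0}_X(*D))$ with $\alpha=\alpha'+\partial\gamma$. By construction of $\mathrm{Res}_\partial$ together with Prop.~\ref{p-properties}(iv), this gives $\mathrm{Res}_\partial\alpha=[\mathrm{Res}\,\alpha']$ in $H^{n-1}(D,\bar{\mathcal O}_D)$.

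Next I would invoke the smooth de~Rham analog of Prop.~\ref{p-1}, namely that $(\mathcal A^\bullet_X\langle D\rangle,d)\hookrightarrow(\mathcal A^\bullet_X(*D),d)$ is a quasi-isomorphism of fine sheaves, to produce a $d$-closed $\tilde\alpha\in\Gamma(X,\mathcal A^n_X\langle D\rangle)$ and $\eta\in\Gamma(X,\mathcal A^{n-1}_X(*D))$ with $\alpha-\tilde\alpha=d\eta$, so that $\mathrm{Res}\,\alpha=[\mathrm{Res}\,\tilde\alpha]$ as a de~Rham class on $D$. A direct local calculation on $\tilde\alpha=df/f\wedge\sigma+\tau$ shows that the Hodge decomposition commutes with taking the Poincar\'e residue: $\tilde\alpha^{n,0}=df/f\wedge\sigma^{n-1,0}+\tau^{n,0}$ is again logarithmic with Poincar\'e residue $\sigma^{n-1,0}|_D=(\mathrm{Res}\,\tilde\alpha)^{n-1,0}$. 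Hence $\psi(\mathrm{Res}\,\alpha)=[\mathrm{Res}(\tilde\alpha^{n,0})]$ in $H^{n-1}(D,\bar{\mathcal O}_D)$.

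To conclude, I would compare $\alpha'$ and $\tilde\alpha^{n,0}$ by taking the $(n,0)$ component of $\alpha'+\partial\gamma=\tilde\alpha+d\eta$: since $\alpha$, $\alpha'$, and $\partial\gamma$ are of pure type $(n,0)$ and $(d\eta)^{n,0}=\partial(\eta^{n-1,0})$, one obtains $\alpha'-\tilde\alpha^{n,0}=\partial(\eta^{n-1,0}-\gamma)$ in $\mathcal A^{n,0}_X(*D)$, whence $[\mathrm{Res}\,\alpha']=[\mathrm{Res}(\tilde\alpha^{n,0})]$ by Prop.~\ref{p-properties}(i) and (iv), yielding $\mathrm{Res}_\partial\alpha=\psi(\mathrm{Res}\,\alpha)$. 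The main obstacle is the smooth de~Rham logarithmic quasi-isomorphism invoked in the second step, which is not stated in the paper; a self-contained proof would redo the associated-graded computation of Prop.~\ref{p-1} with $d=\partial+\bar\partial$ in place of $\partial$, or one could cite the classical meromorphic version in \cite{Deligne} and transfer to the smooth setting via a Dolbeault-resolution/spectral-sequence argument.
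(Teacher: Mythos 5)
Your proposal is correct and follows essentially the same route as the paper: replace $\alpha$ by a representative $\tilde\alpha$ with first order pole in its de Rham class, take $(n,0)$ components, and use that $(d\eta)^{n,0}=\partial\eta^{n-1,0}$ while the residue of $\tilde\alpha^{n,0}$ is the $(n-1,0)$ part of $\mathrm{Res}\,\tilde\alpha$. The de Rham statement you flag as a possible gap is exactly Leray's Th\'eor\`eme~1, already invoked in Section~\ref{s-2} to define the Leray residue, so no new quasi-isomorphism is needed (and your preliminary step via Prop.~\ref{p-1} is redundant, since $\tilde\alpha^{n,0}$ is itself logarithmic and directly computes $\mathrm{Res}_\partial\alpha$).
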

\begin{proof}
If $\alpha$ is a differential form on $X\smallsetminus D$ then its de Rham
cohomology
class contains a form $\tilde\alpha$ with first order pole and the residue
is the residue of $\tilde\alpha$. Now suppose that $\alpha$ is of type
$(n,0)$ with pole on $D$. Then 
\[
\alpha=\tilde\alpha+d\gamma=\tilde\alpha +\partial\gamma+\bar\partial\gamma
\equiv\tilde\alpha+\partial \gamma \mod \bar F^1\mathcal A^\bullet(*D),
\]
where $\bar F^1\mathcal A^p(*D)=\oplus_{q\geq1}\mathcal A^{p-q,q}(*D)$.
Then $\mathrm{Res}\,\alpha=\sigma|_D$, where $\tilde\alpha=df/f\wedge\sigma+\tau$. On the other hand if we denote by $\alpha_p$ the $(p,0)$ part of a
$p$-form $\alpha$,
\[
\alpha=\tilde\alpha_n+\partial\gamma_{n-1}.
\]
Then $\alpha_n=df/f \wedge\sigma_{n-1}$ and by definition 
$\mathrm{Res}_\partial \alpha=\sigma_{n-1}|_D=(\mathrm{Res}\,\alpha)_{n-1}$.  
\end{proof}
\section{Divergent integrals}\label{s-3}
The divergent integrals we are interested in are regularizations of integrals
of the form $\int_X\alpha\wedge\bar\beta$ where $\alpha$, $\beta$ are meromorphic differential
$n$-forms on $X$ with poles on $D$ and $\beta$ has a simple pole. The regularization consists
on integrating on the complement of a small tubular neighborhood of radius $\epsilon$ of $D$ and subtracting the
divergent term as $\epsilon\to 0$. The result depends on the regularization and
we want to describe this dependence. 

\begin{definition}
  A {\em cut-off function} is a smooth function on $X$ with values in
  $[0,\infty)$ so that
  \begin{enumerate}
  \item[(i)] $\lambda(x)>0$ for all $x\in X\smallsetminus D$.
  \item[(ii)] For any local equation
    $f\colon U\to \mathbb C$ for $D$, $\lambda/|f|$ extends to a smooth
    positive function on $U$.
  \end{enumerate}
\end{definition}
Cut-off functions exist as they can be constructed by patching local
functions $\lambda(x)=|f(x)|$ using a partition of unity. They form a
torsor over positive functions, since the ratio of two cut-off functions
is a smooth positive function on $X$.

\begin{theorem}\label{t-3} 
  Let $D\subset X$ be a smooth hypersurface in a compact complex
  manifold.  Let $\lambda\in C^\infty(X)$ be a cut-off function and
  $m\in\{0,1,2,\dots\}$.  Then for any
  $\alpha\in \Gamma(X,\Omega_X(mD))$, $\beta\in\Gamma(X,\Omega_X(D))$,
  \[
    \frac1{2\pi i} \int_{\lambda\geq\epsilon}\alpha\wedge\bar\beta
    =I_0\log\epsilon+I_1(\lambda)+O(\epsilon)
  \]
  where
  \[
    I_0=2(-1)^{n-1}\int_D
    \mathrm{Res}\,\alpha\wedge\overline{\mathrm{Res}\,\beta},
  \]
  and for $\varphi\in C^\infty(X,\mathbb R)$,
  \[
    I_1(e^\varphi\lambda)=I_1(\lambda)- 2(-1)^{n-1}\int_D
    \mathrm{Res}_\partial(\varphi\alpha)\wedge\overline{\mathrm{Res}\,\beta},
  \]
\end{theorem}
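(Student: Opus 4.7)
The plan is to prove the more general statement sketched in the introduction, where $\alpha$ is an arbitrary $C^\infty$ form of type $(n,0)$ on $X\smallsetminus D$ with poles on $D$ and compact support in $X$, and $\mathrm{Res}\,\alpha$ is replaced throughout by $\mathrm{Res}_\partial\alpha$; by Proposition~\ref{p-2.6} this specialises to the stated formula for meromorphic closed $\alpha$. Both sides are $\mathbb R$-linear in $\alpha$ and, by Proposition~\ref{p-properties}(iii), localise to $D\cap\mathrm{supp}(\alpha)$, so a partition of unity argument reduces the theorem to the case where $\alpha$ is supported in a single chart $U$ with coordinates $(z_1,\dots,z_n)$ on which $D\cap U=\{z_1=0\}$.

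On such a chart I would apply Theorem~\ref{t-1} fibrewise over the transverse coordinates $(z_2,\dots,z_n)$. For each fixed value of these, the sublevel sets $\{z_1:\lambda(z_1,z_2,\dots,z_n)<\epsilon\}$ are simply connected neighbourhoods of the origin whose Riemann maps depend holomorphically on $(z_2,\dots,z_n)$ and converge as $\epsilon\to0$. This supplies a holomorphic change of normal coordinate $\tilde z_1$ on $U$ in which $\lambda=e^\psi|\tilde z_1|$ for some real-valued $\psi\in C^\infty(U)$. Granting the variation formula, which I verify below, this allows one to assume $\lambda=|z_1|$ for the computation of $I_0$.

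In that reference case, write $\alpha=a(z)\,dz_1\wedge\cdots\wedge dz_n/z_1^m$ and use Fubini, integrating over $z_1$ first. The identity of Example~\ref{e-2.1} rewrites $\alpha$, modulo $\mathrm{Im}\,\partial$, as an $(n,0)$-form with simple pole whose coefficient at $z_1=0$ is $\mathrm{Res}_\partial\alpha$, plus $\partial$-exact remainders whose integrals against $\bar\beta$ yield only $O(\epsilon)$ contributions and non-logarithmic divergent powers of $\epsilon$. The $\log\epsilon$ term then comes from the elementary computation
\[
\frac{1}{2\pi i}\int_{\epsilon\le|z_1|\le R}\frac{dz_1\wedge d\bar z_1}{|z_1|^2}=-2\log(R/\epsilon),
\]
and the sign $(-1)^{n-1}$ in $I_0$ arises from reordering $dz_1\wedge\cdots\wedge dz_n\wedge d\bar z_1\wedge\cdots\wedge d\bar z_n$ so as to pair $dz_j$ with $d\bar z_j$.

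For the variation formula, I would write the difference of the two regularised integrals as a fibrewise integral over the annulus $e^{-\varphi}\epsilon\le|z_1|\le\epsilon$; to leading order in $\epsilon$ the $z_1$-integration yields $-4\pi i\,\varphi(0,z_2,\dots,z_n)$ in the simple-pole case, which already reproduces $-2(-1)^{n-1}\int_D\mathrm{Res}_\partial(\varphi\alpha)\wedge\overline{\mathrm{Res}\,\beta}$. In the higher-order case, iterating Example~\ref{e-2.1} assembles the right-hand side into $\mathrm{Res}_\partial(\varphi\alpha)$. The main obstacle I anticipate is verifying this last combinatorial step: the non-holomorphic $z_1,\bar z_1$-derivatives of $\varphi$ generated by the iteration must arrange into $\mathrm{Res}_\partial(\varphi\alpha)$ rather than $\varphi\cdot\mathrm{Res}_\partial\alpha$, and the $\partial$-exact remainders must integrate to $O(\epsilon)$. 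Establishing the parameter-dependent version of Theorem~\ref{t-1} with uniform control as $\epsilon\to0$ is the other technical point, but it should follow from the same arguments applied to real analytic families.
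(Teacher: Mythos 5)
Your proposal has the right general shape — localize by a partition of unity, prove the more general statement with $\alpha$ a compactly supported $(n,0)$-form and $\mathrm{Res}_\partial$ in place of $\mathrm{Res}$, then specialize via Proposition~\ref{p-2.6} — and the computation of $I_0$ is essentially the paper's. But there is a concrete gap at the heart of the argument.

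You assert that once $\alpha$ has been written modulo $\mathrm{Im}\,\partial$ as a simple-pole form, the ``$\partial$-exact remainders $\ldots$ yield only $O(\epsilon)$ contributions.'' That is not correct: for $\gamma\in\Gamma_c(X,\mathcal A^{n-1,0}_X(mD))$, Stokes's theorem turns $\langle\partial\gamma,\beta\rangle_{\lambda,\epsilon}$ into a boundary integral over $\{\lambda=\epsilon\}$, and this boundary integral converges as $\epsilon\to0$ to a \emph{nonzero finite, $\lambda$-dependent} quantity. In fact that limit is exactly where the $\lambda$-dependence of $I_1$ lives — if the $\partial$-exact parts truly contributed only $O(\epsilon)$, the variation formula would be vacuous. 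The missing ingredient is a lemma that evaluates this limit:
\[
\langle\partial\gamma,\beta\rangle_{\lambda,\epsilon}
=2\int_D\mathrm{Res}_\partial\bigl(\partial\log\lambda\wedge\gamma\bigr)\wedge\overline{\mathrm{Res}\,\beta}+O(\epsilon).
\]
With this in hand, the ``combinatorial step'' you flag as worrisome becomes a one-line formal manipulation: passing from $\lambda$ to $e^\varphi\lambda$ replaces $\partial\log\lambda$ by $\partial\log\lambda+\partial\varphi$, and
\[
\mathrm{Res}_\partial(\partial\varphi\wedge\gamma)=-\mathrm{Res}_\partial(\varphi\,\partial\gamma)
\]
because $\mathrm{Res}_\partial$ annihilates $\partial(\varphi\gamma)$. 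That equality is precisely how the non-holomorphic derivatives of $\varphi$ ``arrange into $\mathrm{Res}_\partial(\varphi\alpha)$'' — you do not need to iterate Example~\ref{e-2.1} by hand.

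A secondary confusion: the Riemann mapping of Theorem~\ref{t-1} does not ``supply a holomorphic normal coordinate with $\lambda=e^\psi|\tilde z_1|$'' — that factorization is already available in any coordinate chart, since $\lambda/|z_1|$ extends smoothly and positively across $D$. Theorem~\ref{t-1} is instead needed inside the proof of the boundary-integral lemma above: to evaluate $\int_{\lambda=\epsilon}\gamma\wedge\bar\beta$ one changes variable to the circle $|w|=\epsilon$ by the Riemann map of the domain $\{\lambda<\epsilon\}$, and the point of Theorem~\ref{t-1} is that this map extends analytically to $\epsilon=0$, so that replacing $g_\epsilon$ by $g_0$ introduces only an $O(\epsilon)$ error. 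Finally, to make the fibrewise use of Theorem~\ref{t-1} legitimate you also need to reduce to the case where $\lambda^2$ is real analytic (e.g.\ polynomial) in $z_1$ at fixed transverse coordinates; the error incurred by Taylor truncation must be bounded, which requires a separate estimate on the volume of the symmetric difference of the two sublevel sets.
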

The integrals over $D$ in this theorem are understood as integrals of
representatives in the equivalence classes of the residues: in $I_0$ the
Leray residues are de Rham cohomology classes (while Res$\,\beta$ is defined
as a closed form) and the integral is independent of choices by Stokes's
theorem. In the second integral over $D$, $\mathrm{Res}_\partial$ is defined
up to addition of a $\partial$-exact form. Since $\overline{\mathrm{Res}\,\beta}$ is a closed form of type $(0,n-1)$, we have 
\[
\int_D\partial\gamma\wedge\overline{\mathrm{Res}\,\beta}=
\int_D d\gamma\wedge\overline{\mathrm{Res}\,\beta}=
\int_D d(\gamma\wedge\overline{\mathrm{Res}\,\beta})=0,
\]
and the integral is well-defined.

The proof of Theorem \ref{t-3} is done by reducing it to a local
calculation via a partition of unity. It follows from a local version
of the theorem, which we now state, in which we take $\alpha$ to be
smooth with compact support.

Let $X$ be a not necessarily compact complex manifold and $D\subset X$
a smooth hypersurface. Let $\Gamma_c(X,\mathcal A_X^{p,q}(*D))$ be the
vector space of sections of $\mathcal A_X^{p,q}(*D)$ with compact
support. For a cut-off function $\lambda$ and $\epsilon>0$ we consider
the sesquilinear pairing
\[
\langle\ ,\ \rangle_{\lambda,\epsilon}\colon \Gamma_c(X,\mathcal
A^{n,0}_X(*D))\times \Gamma(X,\Omega_X^n(D))\to \mathbb C.
\]
defined by the integral
\[
\langle \alpha,\beta\rangle_{\lambda,\epsilon}=\frac {(-1)^{n-1}} {2\pi
  i}\int_{\lambda\geq\epsilon}\alpha\wedge\bar\beta.
\]
\begin{theorem}\label{t-88} Let $D\subset X$ be a smooth hypersurface in
an arbitrary complex manifold $X$ and $m\in\{0,1,2,\dots\}$.
\begin{enumerate}
\item[(i)] Let
  $\alpha\in \Gamma_c(X,\mathcal A_X^{n,0}(mD))$. Then, as $\epsilon\to0$,
  \[
    \langle\alpha,\beta\rangle_{\lambda,\epsilon} = 2\log\epsilon
    \int_D \mathrm{Res}_\partial\,\alpha\wedge\overline{\mathrm{Res}\,\beta} +
    I_1(\lambda)+O(\epsilon)
  \]
  for some function $I_1(\lambda)$ of the cut-off function.
\item[(ii)] Let $\varphi\in C^\infty(X,\mathbb R)$. Then
  \[
    I_1(e^\varphi\lambda)=I_1(\lambda)- 2\int_D
    \mathrm{Res}_\partial(\varphi\alpha)\wedge\overline{\mathrm{Res}\,\beta},
  \]
 \end{enumerate}
\end{theorem}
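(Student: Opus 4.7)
The plan is to reduce to a local coordinate computation via partition of unity, carry out the integral explicitly for the model cut-off $\lambda=|z_1|$, establish the variation formula (ii) as a signed thin-shell calculation, and combine with the Riemann mapping results of Section~\ref{s-4} to pass from the model to an arbitrary cut-off.

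First, a partition of unity subordinate to a cover of $\mathrm{supp}(\alpha)$ by coordinate charts in which $D$ is either empty or cut out by a single coordinate $z_1$ reduces the problem to the case where $X$ is a polydisk with $D=\{z_1=0\}$, $\alpha=g\,dz_1\wedge\cdots\wedge dz_n/z_1^m$ for $g\in C^\infty_c$, and $\beta=h\,dz_1\wedge\cdots\wedge dz_n/z_1$ with $h$ holomorphic; charts disjoint from $D$ contribute only $O(\epsilon)$ differences absorbable into $I_1$. For $\lambda=|z_1|$, pass to polar coordinates $z_1=re^{i\theta}$ and Taylor expand $g,\bar h$ in $(z_1,\bar z_1)$. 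The angular projection $\int_0^{2\pi}e^{ik\theta}\,d\theta=2\pi\delta_{k,0}$ kills every Fourier mode incompatible with the $e^{-i(m-1)\theta}$ weight carried by $1/(z_1^m\bar z_1)$, so the formally worse-than-logarithmic inverse powers of $\epsilon$ cancel automatically. The unique surviving logarithmic term extracts, via Example~\ref{e-2.1}, precisely $\tfrac{1}{(m-1)!}\partial_{z_1}^{m-1}g(0,z')\cdot\overline{h(0,z')}$, matching $\mathrm{Res}_\partial\alpha\cdot\overline{\mathrm{Res}\,\beta}$.

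Next, for (ii), write $\langle\alpha,\beta\rangle_{e^\varphi\lambda,\epsilon}-\langle\alpha,\beta\rangle_{\lambda,\epsilon}$ as the signed integral of $\alpha\wedge\bar\beta$ over the thin shell $\{\lambda\geq\epsilon\}\triangle\{e^\varphi\lambda\geq\epsilon\}$. In local coordinates with $\lambda=|z_1|e^\psi$, on the shell $r=|z_1|$ lies between the boundaries $re^\psi=\epsilon$ and $re^{\psi+\varphi}=\epsilon$, namely $r\in[\epsilon e^{-\psi-\varphi},\epsilon e^{-\psi}]$ at leading order, with $\theta$-dependent corrections of order $\epsilon^2$. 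The same angular projection applies to the Taylor expansion of the integrand; the leading $\varphi(0,z')$ part of $\log(r_2/r_1)$ together with the $\theta$-dependent boundary corrections (which couple to lower Fourier modes of $g\bar h$) rebuild the full Leibniz expansion of $\partial_{z_1}^{m-1}(\varphi g)|_{z_1=0}$, and the thin-shell integral thus evaluates to $-2\int_D\mathrm{Res}_\partial(\varphi\alpha)\wedge\overline{\mathrm{Res}\,\beta}+O(\epsilon)$. Finally, for a general cut-off $\lambda$ in (i), Theorems~\ref{t-1}--\ref{t-2} furnish a holomorphic change of variable $w=f_0(z_1;z')$---the $\epsilon\to 0$ limit of the Riemann maps onto disks of radius $\epsilon$---after which $\lambda=|w|\,e^{\psi'}$ with smooth $\psi'$ vanishing on $D$. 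Combining (i) for $\lambda=|w|$ (the model case, now in the $w$-coordinate) with (ii) applied to exponent $\psi'$ then yields (i) in general.

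The main technical obstacle is tracking precisely the angular-dependent boundary corrections for $m\geq 2$: the true boundary $\{\lambda=\epsilon\}$ differs from the model boundary $\{|z_1|=\epsilon e^{-\psi(0,z')}\}$ by terms of order $\epsilon^2$, and the $1/z_1^m$ singularity amplifies these into finite contributions that produce exactly the higher derivatives $\partial_{z_1}^k\varphi$ (for $k=1,\dots,m-1$) appearing in $\mathrm{Res}_\partial(\varphi\alpha)$. The holomorphic---not merely smooth---$\epsilon$-dependence of the Riemann maps supplied by Theorems~\ref{t-1}--\ref{t-2} is what keeps the remaining errors at the $O(\epsilon)$ level required for the expansion.
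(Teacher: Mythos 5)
Your overall architecture (partition of unity, model computation for $\lambda=|z_1|$, a variation argument to pass to general cut-offs) is reasonable, and your model-case Fourier analysis for (i) with $\lambda=|z_1|$ is essentially the paper's own computation in case (a) of its key lemma. But the load-bearing step of your proposal --- the thin-shell evaluation of $\langle\alpha,\beta\rangle_{e^\varphi\lambda,\epsilon}-\langle\alpha,\beta\rangle_{\lambda,\epsilon}$ for poles of order $m\geq 2$ and a general base cut-off $\lambda=|z_1|e^{\psi}$ --- is asserted, not proved, and it is exactly where the difficulty of the theorem lives. You must show three things simultaneously: that after expanding the boundary curves as $r_j(\theta,\epsilon)=\epsilon a_1^{(j)}(\theta)+\epsilon^2a_2^{(j)}(\theta)+\cdots$ and the antiderivative of $G/r^{m}$ as a Laurent-plus-log expression, (a) every negative power of $\epsilon$ has vanishing angular integral, (b) all dependence on $\psi$ and its derivatives up to order $m-1$ cancels between the two boundaries, and (c) the surviving finite terms reassemble into the Leibniz expansion of $\partial_{z_1}^{m-1}(\varphi g)|_{z_1=0}$. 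This is a genuinely nontrivial family of trigonometric-polynomial identities (I checked it works for $m=2$, $n=1$; for general $m$ the coefficients $a_i^{(j)}(\theta)$ involve universal polynomials in the jets of $\psi$ and $\varphi$, and the cancellation of $\psi$ is not a term-by-term matter). A correct write-up would have to carry out this bookkeeping in full, which is comparable in length and delicacy to the paper's entire Section 5. Separately, your invocation of Theorems \ref{t-1}--\ref{t-2} for a general cut-off ignores their hypothesis that $\mu=\lambda^2$ be \emph{real analytic}; a cut-off is only smooth, and the paper has to insert an explicit Taylor-polynomial approximation step (with an error estimate beating the $\epsilon^{-m-1}$ amplification of the integrand) before the Riemann mapping results apply.

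The paper avoids your combinatorial problem entirely by an algebraic reduction you do not use: Proposition \ref{p-1} shows that modulo $\mathrm{Im}\,\partial$ every $\alpha\in\Gamma_c(X,\mathcal A^{n,0}_X(mD))$ is a form with a \emph{first-order} pole, for which the bulk integral is only logarithmically singular and the polar computation \eqref{e-99} is elementary. The higher-order poles are then confined to the $\partial$-exact piece $\alpha=\partial\gamma$, where Stokes's theorem converts the bulk integral into a boundary integral over $\{\lambda=\epsilon\}$; Lemma \ref{l-5.1} evaluates its limit as $2\int_D\mathrm{Res}_\partial(\partial\log\lambda\wedge\gamma)\wedge\overline{\mathrm{Res}\,\beta}$ (using the asymptotic Riemann mapping only in this boundary-integral setting), and part (ii) then drops out of the identities $\partial\log(e^\varphi\lambda)=\partial\varphi+\partial\log\lambda$ and $\mathrm{Res}_\partial(\partial\varphi\wedge\gamma)\equiv-\mathrm{Res}_\partial(\varphi\,\partial\gamma)$. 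If you want to salvage your route, you should either prove the thin-shell reassembly identity in full generality or, better, adopt the reduction to first-order poles plus exact forms, which replaces the hard analysis by residue calculus.
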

Theorem \ref{t-3} follows from Theorem \ref{t-88} via the embedding
$\Omega^n_X(*D)\hookrightarrow\mathcal A^{n,0}(*D)$. We can replace
$\mathrm{Res}_\partial$ by the Leray residue $\mathrm{Res}$ in the
coefficient of $\log\epsilon$ thanks to Prop.~\ref{p-2.6}. We prove Theorem
\ref{t-88} in Section \ref{s-5} by a local calculation involving applying
the Riemann Mapping Theorem to relate the region $\lambda<\epsilon$ to
the region $|f|<\epsilon$ for a local equation $f=0$ of $D$. To do this
we study the asymptotic behaviour of Riemann mappings in the next Section.

\section{Asymptotic Riemann mapping}\label{s-4}
Let $\mu(z)=k(z)|z|^2$ where $k(z)>0$ is a positive real analytic
function in a neighbourhood of $0\in\mathbb C$. For small $\epsilon$
the level sets $\mu^{-1}(\epsilon^2)$ bound a simply connected domain
$E_\epsilon$ containing $0$ and by Riemann's mapping theorem there
exists a univalent holomorphic map $h$ from $E_\epsilon$ onto the disk
$|w|<\epsilon$.  The map is unique if we require it to be normalized,
namely that $h(0)=0$ and $h'(0)>0$. The following results give a
description of the behaviour of $h$ as $\epsilon\to 0$.
\begin{theorem}\label{t-1} 
Let $\mu(z)=k(z)|z|^2$ where $k(z)>0$ is a positive real
analytic function  
in a neighbourhood of $0\in\mathbb C$. Then there exist positive constants
$R_1=R_1(\mu),R_2=R_2(\mu)$ and a holomorphic function $h(\epsilon,z)$ defined for
$|\epsilon|<R_1$, $|z|<R_2$ such that for $0<\epsilon<R_1$, $z\mapsto h(\epsilon,z)$
is the normalized univalent map sending the domain $\mu(z)<\epsilon^2$ onto 
the disk $|w|<\epsilon$. Moreover $h$ is an even function of $\epsilon$.
\end{theorem}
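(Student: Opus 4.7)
The plan is to recast the normalized Riemann map in exponential form, so that the $\epsilon$-dependence reduces to a Dirichlet problem with $\epsilon$-independent boundary data, and then solve a complexified functional equation via the holomorphic implicit function theorem in $t=\epsilon^2$. Since $k$ is real analytic near $0$, it extends to a holomorphic function $K(z,w)$ on a bidisc with $K(z,\bar z)=k(z)$ and Hermitian symmetry $K(z,w)=\overline{K(\bar w,\bar z)}$. The domain $E_\epsilon$ depends on $\epsilon$ only through $t=\epsilon^2$, so once any holomorphic family $h(\epsilon,z)$ extending $f_\epsilon$ is produced, uniqueness of the normalized Riemann map and analytic continuation yield $h(-\epsilon,z)=h(\epsilon,z)$. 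Since $E_\epsilon$ is simply connected and $f_\epsilon/z$ is nonvanishing, I write $f_\epsilon(z)=z\exp(-\tilde H_\epsilon(z))$ with $\tilde H_\epsilon$ the unique holomorphic function on $E_\epsilon$ satisfying $\tilde H_\epsilon(0)\in\mathbb R$. The condition $|f_\epsilon|=\epsilon$ on $\partial E_\epsilon$ combined with $|z|^2k(z)=\epsilon^2$ there becomes the $\epsilon$-independent Dirichlet boundary condition
\[
\operatorname{Re}\tilde H_\epsilon(z)=-\tfrac12\log k(z)\quad\text{on }\partial E_\epsilon.
\]

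I complexify $\bar z$ to an independent variable $w$ and introduce $\tilde H^{\ast}(\epsilon,w):=\overline{\tilde H(\bar\epsilon,\bar w)}$. The boundary condition becomes the statement that $\tilde H(\epsilon,z)+\tilde H^{\ast}(\epsilon,w)+\log K(z,w)$ vanishes on the complex curve $K(z,w)zw=\epsilon^2$ in $\mathbb C^2$. Introducing an auxiliary holomorphic unknown $R$, I seek a solution to
\[
\tilde H(\epsilon,z)+\tilde H^{\ast}(\epsilon,w)+\log K(z,w)=\bigl(K(z,w)zw-\epsilon^2\bigr)R(\epsilon,z,w),
\]
treating $\tilde H,\tilde H^{\ast},R$ as independent holomorphic unknowns. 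At $\epsilon=0$ the Taylor decomposition $\log K(z,w)\equiv\log K(z,0)+\log K(0,w)-\log K(0,0)\pmod{zw}$ gives the explicit base solution $\tilde H(0,z)=-\log K(z,0)+\tfrac12\log K(0,0)$, and hence $h(0,z)=zK(z,0)/\sqrt{K(0,0)}$. The linearization at this base sends $(\delta\tilde H,\delta\tilde H^{\ast},\delta R)$ to $\delta\tilde H(z)+\delta\tilde H^{\ast}(w)-K(z,w)zw\,\delta R(z,w)$; by the same Taylor decomposition it is surjective, and the one-dimensional kernel of constants $(c,-c,0)$ is killed by imposing $\tilde H(\epsilon,0)=\tilde H^{\ast}(\epsilon,0)$. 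The holomorphic implicit function theorem then extends the base solution to $|t|<R_1^2$, producing $\tilde H(\epsilon,z)$ holomorphic on the polydisc $|\epsilon|<R_1$, $|z|<R_2$. I set $h(\epsilon,z):=z\exp(-\tilde H(\epsilon,z))$.

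For small real $\epsilon>0$ I then verify that $h(\epsilon,\cdot)$ is the normalized Riemann map of $E_\epsilon$: it vanishes at $0$, has positive real derivative $\exp(-\tilde H(\epsilon,0))$ there (reality of $\tilde H(\epsilon,0)$ follows from uniqueness combined with the Hermitian symmetry of the construction), and $|h(\epsilon,z)|^2=|z|^2\exp(-2\operatorname{Re}\tilde H(\epsilon,z))=|z|^2k(z)=\epsilon^2$ on $\partial E_\epsilon$ by construction; for $\epsilon$ small enough $h$ is a small perturbation of the linear isomorphism $z\mapsto\sqrt{k(0)}z$, so the argument principle forces biholomorphicity onto the disc of radius $\epsilon$. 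The main technical obstacle I expect is the Banach-space setup for the implicit function theorem that keeps the division operator depending holomorphically on $\epsilon$ up to $\epsilon=0$, in view of the nodal degeneration of the curve $K(z,w)zw=\epsilon^2$ at $\epsilon=0$ to the coordinate axes $zw=0$; the introduction of the auxiliary unknown $R$, rather than directly quotienting by the degenerating principal ideal $(Kzw-\epsilon^2)$, is designed to sidestep this by working in a fixed Banach space of holomorphic functions on a bidisc.
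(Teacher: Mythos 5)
Your argument is correct in substance and takes a genuinely different route from the paper. The paper works with the \emph{inverse} Riemann map $f_\epsilon(w)=w(a_0(\epsilon)+a_1(\epsilon)w+\cdots)$, turns the boundary condition $\mu(f_\epsilon(\epsilon u))=\epsilon^2$ into a fixed-point equation $a=F(\epsilon;a)$ for the Taylor coefficients by matching Fourier modes on $|u|=1$, and solves it by the contraction principle plus the analytic implicit function theorem in a weighted $\ell^1$ space; the formula of Theorem \ref{t-2} is then read off from the $\epsilon=0$ fixed-point equation. You instead write the direct map as $ze^{-\tilde H}$, complexify the modulus condition $\operatorname{Re}\tilde H=-\tfrac12\log k$ on the analytic curve $\partial E_\epsilon$ into a holomorphic identity on a fixed bidisc with the Weierstrass-division-type unknown $R$, and solve what is in fact an \emph{affine} equation in the unknowns with coefficients depending holomorphically on $t=\epsilon^2$ (so the ``IFT'' reduces to a Neumann series for $L_t^{-1}$, with $L_0$ invertible on the subspace $\tilde H(0)=\tilde H^*(0)$). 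This buys you three things for free: evenness in $\epsilon$, the explicit base solution $h(0,z)=zK(z,0)/\sqrt{K(0,0)}=\sum c_{r,0}z^{r+1}/\sqrt{c_{0,0}}$ (i.e., Theorem \ref{t-2}), and a clean separation of the reality constraint, which you correctly recover from the Hermitian symmetry $K(z,w)=\overline{K(\bar w,\bar z)}$ and local uniqueness. The one point you flag as a worry — the Banach setup — is genuinely resolvable and no harder than the paper's own estimates: take the weighted $\ell^1$ norm on Taylor coefficients of the bidisc, for which restriction to $w=0$, division by $zw$ of functions vanishing on $zw=0$, and multiplication by $1/K$ are all bounded on a \emph{fixed} bidisc, so the linearization has a bounded inverse and no domain shrinkage occurs. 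Two steps you should spell out if writing this up in full: the degree argument (the winding number of $h=ze^{-\tilde H}$ along the Jordan curve $\partial E_\epsilon$ is $1$ because $e^{-\tilde H}$ is nonvanishing with single-valued logarithm, whence bijectivity onto $|w|<\epsilon$ by the argument principle and maximum principle), and the fact that for small $\epsilon$ the sublevel set $\{\mu<\epsilon^2\}$ near $0$ is a Jordan domain — both are also used, essentially without comment, in the paper.
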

\begin{theorem}\label{t-2} The limiting Riemann mapping $h(0,z)$ has the following
description. Let $\mu(z)=\sum_{r,s}c_{r,s}z^{r+1}\bar z^{s+1}$ be the
Taylor expansion of $\mu$. Then
\[
h(0,z)=\sum_{r=0}^\infty \frac{c_{r,0}}{\sqrt{c_{0,0}}}\,z^{r+1}
\]
\end{theorem}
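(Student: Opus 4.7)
The plan is to exploit the boundary relation $|h(\epsilon,z)|^2=\epsilon^2$ on the curve $\{\mu(z)=\epsilon^2\}$ by complexifying the conjugation and extracting the Taylor coefficients of $h(0,z)$ directly. Since $h(\epsilon,z)$ is holomorphic in $(\epsilon,z)$ and even in $\epsilon$ by Theorem \ref{t-1}, I set $t=\epsilon^2$ and write $H(t,z)=\sum_{j,n\ge 0}b_{j,n}t^jz^n$, convergent on a bidisk around the origin. The normalization $h(\epsilon,0)=0$ forces $H(t,0)\equiv 0$, so $b_{j,0}=0$ for all $j$. Then I form the reflected series $H^*(t,\zeta)=\sum_{j,n}\overline{b_{j,n}}t^j\zeta^n$, which is holomorphic on the same bidisk and satisfies $H^*(t,\bar z)=\overline{H(t,z)}$ whenever $t$ is real. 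Because $\mu$ is real analytic, I complexify it to a holomorphic function $\tilde\mu(z,\zeta)=\tilde k(z,\zeta)\,z\zeta$ near $(0,0)\in\mathbb C^2$, where $\tilde k(z,\zeta)=\sum_{r,s}c_{r,s}z^r\zeta^s$ reduces to $k(z)$ when $\zeta=\bar z$.

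The central step is the complexified boundary identity. For $z$ near $0$ with $\epsilon=\sqrt{\mu(z)}>0$, the boundary condition reads $H(\mu(z),z)\,H^*(\mu(z),\bar z)=\mu(z)$ as a real-analytic equality in $z$. Treating $\bar z$ as an independent complex variable $\zeta$, the identity principle for holomorphic functions of $(z,\zeta)$ yields
\[
H(\tilde\mu(z,\zeta),z)\,H^*(\tilde\mu(z,\zeta),\zeta)=\tilde\mu(z,\zeta)
\]
on a neighbourhood of $(0,0)$.

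Differentiating in $\zeta$ and evaluating at $\zeta=0$ then finishes the job. One computes $\tilde\mu(z,0)=0$, $\partial_\zeta\tilde\mu(z,0)=z\,\tilde k(z,0)=\sum_{r\ge 0}c_{r,0}\,z^{r+1}$, $H(0,z)=h(0,z)$, $H^*(0,0)=0$, and $\partial_tH^*(0,0)=\overline{b_{1,0}}=0$, so the chain rule collapses the left-hand side to $\overline{a_1}\,h(0,z)$ with $a_1=\partial_zh(0,0)$. This produces
\[
\overline{a_1}\,h(0,z)=\sum_{r\ge 0}c_{r,0}\,z^{r+1}.
\]
The normalization $h'_\epsilon(0)>0$ passes to the limit, so $a_1\geq 0$ is real; comparing the coefficient of $z^1$ gives $a_1^2=c_{0,0}$, hence $a_1=\sqrt{c_{0,0}}$, and dividing through yields the stated formula.

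The main obstacle lies in the second step: one must verify that the reflected series $H^*$ is holomorphic (immediate by coefficient-wise conjugation once $H$ is known to be holomorphic, which is Theorem \ref{t-1}) and that the real-analytic identity on the totally real submanifold $\{\zeta=\bar z\}$ genuinely extends to a holomorphic identity on a full two-variable neighbourhood of the origin. Once that is in place, the remainder is a formal chain-rule computation at the origin.
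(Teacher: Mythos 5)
Your proof is correct, and it takes a genuinely different route from the paper's. The paper obtains Theorem~\ref{t-2} from Lemma~\ref{l-2}: it works with the \emph{inverse} Riemann mapping, writes the boundary condition as a fixed point equation $a=F(\epsilon;a)$ for its Taylor coefficients in the Banach space $\ell^1_R(\mathbb C)$, observes that the $\epsilon=0$ equation only involves the coefficients $c_{r,0}$, and then identifies the unique fixed point by comparison with the explicitly solvable model case $\mu=|h|^2$ (Example~\ref{e-3.1}), uniqueness coming from the contraction estimate of Lemma~\ref{l-4}. You instead work directly with the forward map $h$: using the holomorphy in $(\epsilon,z)$ and the evenness in $\epsilon$ furnished by Theorem~\ref{t-1}, you pass to $H(t,z)=h(\sqrt t,z)$, complexify the real-analytic boundary identity $H(\mu(z),z)\,\overline{H(\mu(z),z)}=\mu(z)$ by replacing $\bar z$ with an independent variable $\zeta$ and $\overline{H}$ by the reflected series $H^*$ (valid by the identity principle on the maximal totally real submanifold $\{\zeta=\bar z\}\subset\mathbb C^2$), and read off $h(0,z)$ by one differentiation at $\zeta=0$. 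Both arguments rely on Theorem~\ref{t-1}; the paper's Lemma~\ref{l-2} is a natural byproduct of the machinery already set up to prove Theorem~\ref{t-1}, whereas yours is an independent and somewhat slicker deduction once the holomorphic family is known to exist. One minor point to tighten: you write that the normalization gives $a_1\ge 0$ in the limit, and you need $a_1>0$ to divide; this follows immediately from $|a_1|^2=c_{0,0}>0$, which your coefficient comparison already yields, so the gap is cosmetic.
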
  

\begin{example}\label{e-3.1}
Suppose that $\mu(z)=|h(z)|^2$ for some holomorphic function $h(z)$ such
that $h(0)=0$ and $h'(0)>0$. Then $h(\epsilon,z)=h(z)$ is the normalized
univalent map and is independent of $\epsilon$. 
If $h(z)=z(c_0+c_1z+\cdots)$ with $c_0>0$, then the Taylor coefficients
of $\mu(z)$ are $c_{r,s}=c_r\bar c_s$. In particular $c_{r,0}=c_rc_0$ and
$c_{0,0}=c_0^2$.
\end{example}
\begin{example}\label{e-3.2}
Let $\mu(z)=\psi(|z|^2)$ for some real analytic function $\psi$ such
that $\psi(0)=0,\psi'(0)>0$. Then $h(\epsilon,z)=C(\epsilon)z$ where
\[
C(\epsilon)=\frac \epsilon{\sqrt{\psi^{-1}(\epsilon^2)}}.
\]
In this case $c_{0,0}=\psi'(0)$ and $c_{r,0}=0$ for $r>0$, and indeed
$h(0,z)=C(0)z=\sqrt{\psi'(0)}z$. 
\end{example}
\begin{example}\label{e-3.3}
This is rather a counterexample, showing that regularity assumptions
on $\mu$ are necessary.
Consider the family of maps
\[
w=h(\epsilon,z)=\frac {2\epsilon z}{2\epsilon +z}
\]
with parameter $\epsilon>0$. Then $z\mapsto h(\epsilon,z)$ is the
univalent map sending $\mu(z)<\epsilon^2$ to $|w|<\epsilon$, where
$\mu\colon\mathbb C\to \mathbb R_{\geq0}$ is defined by the condition
that $\mu(0)=0$ and $\mu(z)=\epsilon^2$ on the circle
$|z-2\epsilon/3|=4\epsilon/3$ (these circles enclose the origin and
foliate $\mathbb C\smallsetminus 0$).  It can be shown that $\mu(z)$
is smooth except at $z=0$ where it is continuous and obeys
$\mu(z)\leq \text{const}\,|z|^2$. The functions of the family
$h(\epsilon,z)$ does not have a common domain of definition and do not
have a reasonable limit as $\epsilon\to0$.
\end{example}
\begin{example}
Another counterexample, showing that Theorem \ref{t-1} can fail if $\mu(z)$ is
not of the form $k(z)|z|^2$. Let 
\[
\mu(x+iy)=\frac{x^2}{a^2}+\frac{y^2}{b^2}, \quad \text{with $a\neq b$.}
\]
Since $\mu$ is homogeneous the normalized Riemann mapping $h(\epsilon,z)$ sending
$\mu(z)<\epsilon^2$ to $|w|<\epsilon$ obeys
\[
h(\epsilon,z)=\epsilon h(1,z/\epsilon),
\]
Since the Riemann mapping $h(1,z)$ of the ellipse is non-linear, $h(\epsilon,z)$
does not have a limit as $\epsilon\to 0$.
\end{example}
\begin{remark}
The assumption of real analyticity of $\mu$ seems to be essential. It implies that
the boundary of the level sets for small level are real analytic so that---by the
Schwarz reflection principle---the Riemann mappings have an analytic continuation
to a neighbourhood of the boundary, a necessary condition for the existence of
the limiting map, which is not fulfilled for general smooth $\mu$. 
As Example \ref{e-3.3} shows, even the real analyticity of the level sets is not
sufficient for the existence of a limiting Riemann mapping.
\end{remark}
\begin{remark} Let $\mu$ be as in Theorem \ref{t-1}. Let $\psi$ be a real analytic
local diffeomorphism on a neighbourhood of $0\in\mathbb R$ such that $\psi(0)=0$, 
$f$ a local biholomorphic
map such that $f(0)=0$. Then $\tilde\mu=\psi\circ\mu\circ f$ also obeys the assumptions
of Theorem \ref{t-1} and the corresponding normalized Riemann mappings are related by     
\[
\frac{f'(0)}{|f'(0)|}\tilde h(\tilde\epsilon,z)=h(\epsilon,f(z)),\quad \tilde\epsilon^2=\psi(\epsilon^2).
\]
\end{remark}
To prove Theorems \ref{t-1} and \ref{t-2} it is convenient to
consider, instead of the Riemann mapping, the {\em inverse} Riemann
mapping, the univalent map
$w\mapsto z=f_{\epsilon}(w)=w(a_0(\epsilon)+a_1(\epsilon)w+\cdots)$
with $a_0(\epsilon)>0$, sending $|w|^2<\epsilon^2$ onto
$\mu(z)<\epsilon^2$. Also, by rescaling $\epsilon$ we may assume
that $\mu(z)=k(z)|z|^2$ with $k(0)=1$.

The proof is based on the contraction principle: we write the
equation for coefficient the inverse Riemann as a fixed point equation
for a map that is a contraction in a suitable metric space and depends
smoothly on $\epsilon$ including at $\epsilon=0$.

\begin{lemma}\label{l-1} Let
  $\mu(z)=\sum_{r,s=0}^\infty c_{r,s}z^{r+1}\bar z^{s+1}$ be the
  Taylor expansion of $\mu$.  Let $\epsilon>0$ be small. Then
  $z=\sum_{n=0}^\infty a_n(\epsilon)w^{n+1}$ is the normalized
  univalent map sending $D_\epsilon=\{w\in\mathbb C: \ |w|<\epsilon\}$
  onto $E_\epsilon=\{z\in\mathbb C:\mu(z)<\epsilon^2\}$ if and only if the
  series converges to a univalent function on $D_\epsilon$ with
  $a_0(\epsilon)>0$ and the sequence
  $a(\epsilon)=(a_n(\epsilon))_{n=0}^\infty$ obeys
  \[
  a_n(\epsilon)=F_n(\epsilon;a(\epsilon)),\quad n=0,1,2,\dots,
  \]
  where for $n\geq 1$,
  \begin{align*} 
    F_n(\epsilon;a)=&-\sum_{r+s>0} c_{r,s}\sum_{r+|p|-s-|q|=n}\epsilon^{2s+2|q|}
                      a_{p_1}\cdots a_{p_{r+1}}\bar a_{q_1}\cdots\bar a_{q_{s+1}}
    \\
                    &-a_n(\bar a_0-1)-\sum_{k\geq1}a_{n+k}\bar a_k\epsilon^{2k},
  \end{align*}
  with $|p|=\sum_{i=1}^rp_i$ and $|q|=\sum_{i=1}^sq_i$, and for $n=0$,
  \begin{align*}
    F_0(\epsilon;a)
    =1&-\frac12\sum_{r+s>0} c_{r,s}\sum_{r+|p|-s-|q|=0}\epsilon^{2s+2|q|}
        a_{p_1}\cdots a_{p_{r+1}}\bar a_{q_1}\cdots\bar a_{q_{s+1}}
    \\
      &-\frac12|a_0-1|^2-\frac12\sum_{k\geq1}a_{k}\bar a_k\epsilon^{2k}.
  \end{align*}
\end{lemma}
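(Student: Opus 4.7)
The plan is to turn the Riemann mapping condition into a Laurent-series identity on the boundary circle and then read off the equations coefficient by coefficient.

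\textbf{Step 1 (reduction to the boundary equation).} For small $\epsilon$ the set $E_\epsilon$ is bounded by a real-analytic Jordan curve, so by Carath\'eodory's theorem the normalized inverse Riemann mapping extends smoothly to the closure. Hence a univalent holomorphic map $f_\epsilon(w)=\sum_{n\geq 0}a_n(\epsilon)w^{n+1}$ on $D_\epsilon$ with $a_0(\epsilon)>0$ is the normalized biholomorphism $D_\epsilon\to E_\epsilon$ if and only if $\mu(f_\epsilon(w))=\epsilon^2$ whenever $|w|=\epsilon$ (the normalizations $f_\epsilon(0)=0$ and $f_\epsilon'(0)>0$ are built into the form of the series, and univalence plus the boundary condition forces the image to be $E_\epsilon$ by the argument principle).

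\textbf{Step 2 (Laurent expansion on the circle).} On $|w|=\epsilon$ we have $\bar w=\epsilon^2/w$, so
\[
\overline{f_\epsilon(w)}^{\,s+1}=\sum_{q_1,\dots,q_{s+1}\geq 0}
\bar a_{q_1}\cdots\bar a_{q_{s+1}}\,\epsilon^{2(|q|+s+1)}\,w^{-(|q|+s+1)}.
\]
Combining with the expansion of $f_\epsilon(w)^{r+1}$ as a power series in $w$, substituting into $\mu(z)=\sum c_{r,s}z^{r+1}\bar z^{s+1}$ and dividing by $\epsilon^2$ yields a (convergent, by univalence and smallness of $\epsilon$) Laurent series in $w$ which must equal the constant $1$. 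By uniqueness of Laurent/Fourier coefficients this is equivalent to: the $w^0$ coefficient equals $1$ and every other coefficient vanishes.

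\textbf{Step 3 (the equations for $n\geq 1$).} Collecting the coefficient of $w^n$, the contribution from $(r,s)=(0,0)$ (where $c_{0,0}=1$) is $\sum_{k\geq 0}a_{n+k}\bar a_k\epsilon^{2k}=a_n\bar a_0+\sum_{k\geq 1}a_{n+k}\bar a_k\epsilon^{2k}$. Writing $a_n\bar a_0=a_n+a_n(\bar a_0-1)$ and solving for $a_n$ gives exactly $a_n=F_n(\epsilon;a)$ as stated.

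\textbf{Step 4 (the equation for $n=0$).} The $w^0$ coefficient gives $|a_0|^2=1-A$, where
\[
A=\sum_{k\geq 1}|a_k|^2\epsilon^{2k}+\sum_{r+s>0}c_{r,s}\sum_{r+|p|-s-|q|=0}\epsilon^{2s+2|q|}a_{p_1}\cdots\bar a_{q_{s+1}}.
\]
Since $a_0>0$ is real, a direct algebraic check shows $|a_0|^2=1-A \iff a_0=1-\tfrac12|a_0-1|^2-\tfrac12 A$, which is precisely $F_0(\epsilon;a)$. Finally, the $w^{-n}$ coefficients with $n\geq 1$ are the complex conjugates of the $w^n$ equations (using that $\mu$ is real, i.e.\ $c_{s,r}=\bar c_{r,s}$), so they yield no new constraints.

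\textbf{Main obstacle.} There is no analytic difficulty beyond the classical smooth boundary extension in Step 1; the challenge is purely bookkeeping of indices $r,s,|p|,|q|$ and powers of $\epsilon$ during the substitution $\bar w=\epsilon^2/w$, together with the small algebraic trick in Step 4 that rewrites the real quadratic constraint on $a_0$ as a fixed-point equation of the right form. This rewriting is essential because Theorem \ref{t-1} will be proved by a contraction argument, for which the equation $a_0=F_0(\epsilon;a)$ must be genuinely a fixed-point equation (not an implicit square root).
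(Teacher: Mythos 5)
Your proof is correct and follows essentially the same route as the paper: pass to the boundary condition $\mu(f_\epsilon(w))=\epsilon^2$ on $|w|=\epsilon$ via smooth boundary extension, substitute $\bar w=\epsilon^2/w$, and match Fourier/Laurent coefficients, with the positive real coefficients only constrained for $n\geq 0$. The paper merely streamlines the bookkeeping by first writing $f_\epsilon(w)=w(1+g_\epsilon(w))$ and $k=1+m$ so that the factor $1/2$ in $F_0$ arises from $g_\epsilon+\bar g_\epsilon=2\operatorname{Re}g_\epsilon$ on the circle rather than from the algebraic identity $(a_0-1)(a_0+1)=-A$ that you work out explicitly in Step~4; the two derivations are equivalent.
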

\begin{proof} Since the curve $\mu(z)=\epsilon^2$ is smooth if
  $\epsilon$ is small enough, the Riemann mapping and its inverse
  extend to smooth maps on the closure (in fact to holomorphic maps on
  a neighbourhood of the closure with our assumption of real
  analyticity of $\mu$). Thus the restriction to the boundary of the
  inverse Riemann mapping $f_\epsilon(w)$ has a convergent Fourier
  series and sends the circle $|w|=\epsilon$ to the curve
  $\mu(z)=\epsilon^2$.

  We write this condition as a fixed point equation.  Write
  $f_\epsilon(w)=w(1+g_\epsilon(w))$, $k(z)=1+m(z)$, so that
  \begin{equation}\label{e-m}
    m(z)=\sum_{r+s>0} c_{r,s}z^r\bar z^s.
  \end{equation}
  Then the equation for $g_\epsilon(w)=a_0-1+\sum_{n=1}^\infty a_nw^n$
  is
  \[
    (1+m(f_\epsilon(\epsilon u)))|1+g_\epsilon(\epsilon u)|^2=1,\quad
    |u|=1.
  \]
  We can write this equation in the form
  \begin{equation}\label{e-Laurent}
    g_\epsilon(\epsilon u) + \bar g_\epsilon(\epsilon u^{-1})
    = 
    - m(f_\epsilon(\epsilon u))     |1+g_\epsilon(\epsilon u)|^2 
    - 
    |g_\epsilon(\epsilon u)|^2, \quad |u|=1.
  \end{equation}
  Being an identity of real-valued Fourier series, \eqref{e-Laurent}
  holds if and only if the coefficients of $u^n$ for $n\geq0$ coincide
  on both sides. Let $[\varphi(u)]_n$ denote the $n$th Fourier
  coefficient of a smooth function $\varphi(u)$ on the unit circle
  $|u|=1$.  Together with the condition that $a_0$ is real and
  positive, we get the equivalent condition
  \begin{align*}
    a_0 &= 1-\frac12 \left[ m(f_\epsilon(\epsilon u)) 
          |1+g_\epsilon(\epsilon u)|^2 + 
          |g_\epsilon(\epsilon u)|^2\right]_{0},
    \\
    a_n &= -\frac1{\epsilon^n} 
          \left[m(f_\epsilon(\epsilon u))
          |1+g_\epsilon(\epsilon u)|^2+|g_\epsilon(\epsilon u)|^2\right]_{n}, 
          \quad n\geq1.
  \end{align*}
  This equation has the form $a_n=F_n(\epsilon;a_0,a_1,a_2,\dots)$,
  with
  \begin{align*}
    F_n(\epsilon;a)
    =&-\sum_{r+s>0} c_{r,s}\sum_{r+|p|-s-|q|=n}\epsilon^{r+s+|p|+|q|-n}
       a_{p_1}\cdots a_{p_{r+1}}\bar a_{q_1}\cdots\bar a_{q_{s+1}}
    \\
     &-a_n(\bar a_0-1)-\sum_{k\geq1}a_{n+k}\bar a_k\epsilon^{2k}
    \\
    =&-\sum_{r+s>0} c_{r,s}\sum_{r+|p|-s-|q|=n}\epsilon^{2s+2|q|}
       a_{p_1}\cdots a_{p_{r+1}}\bar a_{q_1}\cdots\bar a_{q_{s+1}}
    \\
     &-a_n(\bar a_0-1)-\sum_{k\geq1}a_{n+k}\bar a_k\epsilon^{2k},
  \end{align*}
  for $n\geq1$. For $n=0$ the calculation is similar.
\end{proof}
We notice that $F(\epsilon;a)$ is defined (a priori only as a formal
power series in the $a_j$) also for $\epsilon=0$.  In this case we can
find the answer explicitly.
\begin{lemma}\label{l-2} 
  Let $\mu(z)=\sum_{r,s=0}^\infty c_{r,s}z^{r+1}\bar z^{s+1}$ with
  $c_{0,0}=1$.  Let
  \[
    h(z)=\sum_{r=0}^\infty c_{r,0}z^{r+1},
  \]
  and $h^{-1}(w)=w(a_0+a_1w+\cdots)$ its inverse. Then
  $a=(a_n)_{n=0}^\infty$ is a solution of $a=F(0;a)$.
\end{lemma}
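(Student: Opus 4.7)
The plan is to substitute $\epsilon = 0$ into the formulas of Lemma \ref{l-1} and recognize the resulting system as the classical formal-power-series inversion identity for $h^{-1}$.

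First I would isolate the surviving terms. In the double sum appearing in $F_n(\epsilon; a)$, the factor $\epsilon^{2s + 2|q|}$ forces $s = 0$ and $|q| = 0$ when $\epsilon = 0$; since for $s = 0$ the index $q = (q_1)$ has a single component, this forces $q_1 = 0$, and the corresponding factor collapses to $\bar a_0$. The tail $\sum_{k \geq 1} a_{n+k} \bar a_k \epsilon^{2k}$ drops out for the same reason.

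Next I would pin down $a_0$. For $n = 0$, the constraint $r + |p| = 0$ with $r \geq 1$ is infeasible, so the double sum is empty and $F_0(0; a) = 1 - \frac12 |a_0 - 1|^2$. Imposing $a_0 = F_0(0; a)$ together with the normalization $a_0 > 0$ real forces $a_0 = 1$, in agreement with $h'(0) = c_{0,0} = 1$. With $a_0 = 1$ the summand $-a_n(\bar a_0 - 1)$ in $F_n$ vanishes, and for $n \geq 1$ the equation $a_n = F_n(0; a)$ reduces to
\[
a_n = -\sum_{r \geq 1} c_{r,0} \sum_{\substack{p_1, \ldots, p_{r+1} \geq 0 \\ r + p_1 + \cdots + p_{r+1} = n}} a_{p_1} \cdots a_{p_{r+1}}.
\]

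To finish I would substitute $h^{-1}(w) = \sum_{n \geq 0} a_n w^{n+1}$ into the identity $h(h^{-1}(w)) = w$, expand each power $h^{-1}(w)^{r+1}$ by the multinomial theorem, and compare the coefficients of $w^{n+1}$; this produces precisely the displayed equation. The main obstacle is really just the bookkeeping in the first step: one must keep in mind that $|q|$ is a sum of $s+1$ components, so that $|q| = 0$ already implies every $q_i = 0$, in order to see that all contributions with $s \geq 1$ or any nonzero $q_i$ are killed by the $\epsilon^{2s + 2|q|}$ factor. Once this is checked, the rest is the classical reversion-of-series identity.
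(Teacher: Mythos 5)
Your proof is correct, but it finishes differently from the paper. Both arguments begin with the same observation: at $\epsilon=0$ the factor $\epsilon^{2s+2|q|}$ kills every term except those with $s=0$ and $|q|=0$ (and, as you rightly note, since $q$ has $s+1$ components the condition $|q|=0$ forces $q_1=0$, so the antiholomorphic factor collapses to $\bar a_0$), hence $a=F(0;a)$ involves only the coefficients $c_{r,0}$. From there the paper argues indirectly: since the equation depends only on the $c_{r,0}$, one may as well replace $\mu$ by $|h(z)|^2$ with $h(z)=\sum c_{r,0}z^{r+1}$, and for that special $\mu$ Example \ref{e-3.1} together with Lemma \ref{l-1} shows that the coefficients of $h^{-1}$ satisfy $a=F(\epsilon;a)$ for all small $\epsilon>0$, hence (letting $\epsilon\to0$, the coefficients being independent of $\epsilon$) also at $\epsilon=0$. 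You instead write out the reduced system explicitly --- $F_0(0;a)=1-\tfrac12|a_0-1|^2$, and for $n\geq1$, $a_n=-\sum_{r\geq1}c_{r,0}\sum_{r+|p|=n}a_{p_1}\cdots a_{p_{r+1}}$ --- and identify it as the coefficient identity obtained from $h(h^{-1}(w))=w$. Your route is more self-contained and makes the limiting equation completely explicit (it also sidesteps the small continuity-in-$\epsilon$ step the paper leaves implicit); the paper's route avoids the reversion bookkeeping by recycling the already-verified special case. One cosmetic remark: for the verification direction it is cleaner to take $a_0=1$ as given by $h'(0)=c_{0,0}=1$ and check that it satisfies $a_0=F_0(0;a)$, rather than to solve that equation for $a_0$; but this does not affect correctness.
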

\begin{proof}
  By Example \ref{e-3.1}, if $\mu(z)=|h(z)|^2$ for some holomorphic
  function $h$ such that $h(0)=0,h'(0)=1$ then $f_\epsilon=h^{-1}$ is
  the univalent map of Lemma \ref{l-1} and is independent of
  $\epsilon$. In this case the Taylor coefficients $c_{r,0}$ are
  precisely the Taylor coefficients of $h$. Thus the equation
  $a=F(0;a)$, which only involves $c_{r,s}$ with $s=0$, is the same
  for a function $\mu$ with arbitrary Taylor coefficients $c_{r,s}$ as
  for $\mu=|h(z)|^2$ with $h(z)=\sum c_{r,0}z^{r+1}$.
\end{proof}

Let $\ell_R^1(\mathbb C)$ be the Banach space of sequences
$a=(a_n)_{n=0}^\infty$ of complex numbers with finite weighted
$\ell^1$-norm $\|a\|_R=\sum_{n=0}^\infty |a_n|R^n$.

\begin{lemma}\label{l-3} 
  Let $0<\delta<1$. With the same notations and assumptions as in
  Lemma \ref{l-1}, there exists an $R>0$ depending on $\mu$ and
  $\delta$, such that the power series $F$ defines a holomorphic map
  \[
    \{\epsilon\in\mathbb C:|\epsilon|<R\}\times B_{\delta}(a^\circ)\to
    B_{\delta}(a^\circ),
  \]
  where
  \[
    B_{\delta}(a^\circ)=B_\delta(a^\circ;\|\ \|_R)=
    \{a\in\ell_R^1(\mathbb C):\|a-a^\circ\|_R<\delta\}
  \] 
  is the open ball of radius $\delta$ centered at
  $a^\circ=(1,0,0,\dots)$ in $\ell_R^1(\mathbb C)$.
\end{lemma}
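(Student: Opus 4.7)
The plan is to majorise $\|F(\epsilon,a)-a^\circ\|_R$ by the weighted $\ell^1$-norm of a Laurent series in an auxiliary variable $u$, exploiting submultiplicativity of that norm under convolution together with the absolute convergence of the Taylor expansion of $\mu$.

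First, I would recast the formulas from the proof of Lemma~\ref{l-1} using the generating functions $\phi(w)=\sum_{n\geq 0}a_nw^n$ and $\bar\phi(w)=\sum_{n\geq 0}\bar a_nw^n$, treating the $\bar a_n$ as independent complex coordinates alongside the $a_n$. Up to a factor $1/2$ at $n=0$, each component $F_n(\epsilon,a)-\delta_{n,0}$ equals $-[M]_n/\epsilon^n$ for the Laurent series $M(u)=M_1(u)+M_2(u)$ with
\[
M_1(u)=m\bigl(\epsilon u\,\phi(\epsilon u)\bigr)\phi(\epsilon u)\bar\phi(\epsilon/u),\qquad M_2(u)=\bigl(\phi(\epsilon u)-1\bigr)\bigl(\bar\phi(\epsilon/u)-1\bigr).
\]
Denote by $\|\sum c_nu^n\|_{\ell^1_\rho}=\sum|c_n|\rho^n$ the $\rho$-weighted $\ell^1$-norm on Laurent series, which is submultiplicative under convolution. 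Setting $\rho=R/|\epsilon|$, the triangle inequality yields
\[
\|F(\epsilon,a)-a^\circ\|_R\leq\|M_1\|_{\ell^1_\rho}+\|M_2\|_{\ell^1_\rho}.
\]

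Next I would estimate each summand. The series $\phi(\epsilon u)-1$ has only non-negative powers of $u$, with $\ell^1_\rho$-norm equal to $\|a-a^\circ\|_R<\delta$; symmetrically $\bar\phi(\epsilon/u)-1$ has only non-positive powers, with $\ell^1_\rho$-norm equal to $\|\bar a-\bar a^\circ\|_{|\epsilon|^2/R}\leq\|\bar a-\bar a^\circ\|_R<\delta$, using monotonicity of the weighted $\ell^1$-norm on sequences and the inequality $|\epsilon|^2/R<R$ that follows from $|\epsilon|<R$. Submultiplicativity yields $\|M_2\|_{\ell^1_\rho}<\delta^2$. For $M_1$, I treat $m(z,\bar z)=\sum_{r+s>0}c_{r,s}z^r\bar z^s$ as a power series in two independent variables, substitute $z=\epsilon u\,\phi(\epsilon u)$ and $\bar z=(\epsilon/u)\bar\phi(\epsilon/u)$, and apply submultiplicativity together with $\|\phi\|_{\ell^1_\rho}=\|a\|_R\leq 1+\delta$, $\|\bar\phi\|_{\ell^1_\rho}\leq 1+\delta$, and $|\epsilon|^{2s}R^{r-s}\leq R^{r+s}$, to obtain
\[
\|M_1\|_{\ell^1_\rho}\leq(1+\delta)^2\sum_{r+s>0}|c_{r,s}|\bigl((1+\delta)R\bigr)^{r+s}.
\]
By real analyticity of $\mu$, the series $\sum_{r+s>0}|c_{r,s}|t^{r+s}$ converges for small $t>0$ and tends to $0$ as $t\to 0$.

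To conclude, given $\delta\in(0,1)$ I would choose $R>0$ small enough that the $M_1$-majorant is strictly less than $\delta(1-\delta)>0$; this is possible precisely because $\delta<1$. Combining the two bounds yields
\[
\|F(\epsilon,a)-a^\circ\|_R<\delta(1-\delta)+\delta^2=\delta,
\]
so $F$ sends $B_\delta(a^\circ)$ into itself. The case $\epsilon=0$ is covered by continuity since both displayed majorants are $\epsilon$-independent. Holomorphy of $F$ follows from the absolute convergence of its defining power series in the complex variables $(\epsilon,a_n,\bar a_n)$ on the relevant polydisk. The main delicate point is that the $M_2$-bound $<\delta^2$ is essentially sharp, so the slack $\delta-\delta^2=\delta(1-\delta)$ available for absorbing $\|M_1\|$ vanishes as $\delta\to 1$; this is why $R$ must be permitted to depend on $\delta$ and why the statement is confined to $\delta<1$.
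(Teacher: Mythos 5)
Your proposal is correct and follows essentially the same route as the paper: the decomposition $M_1+M_2$ is the paper's split of $F(\epsilon;a)-a^\circ$ into the $c_{r,s}$-dependent part and the quadratic part, the key inequality $\|\ \|_{|\epsilon|^2/R}\leq\|\ \|_R$ for $|\epsilon|<R$ appears in both, and the resulting bounds $(1+\delta)^2\sum_{r+s>0}|c_{r,s}|((1+\delta)R)^{r+s}$ and $O(\delta^2)$ with a final smallness choice of $R$ coincide (the paper retains a factor $\tfrac12$ on the quadratic term by using only the Fourier modes $n\geq0$, whereas you absorb the full Laurent norm and compensate with the slack $\delta(1-\delta)$ — an immaterial difference). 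The generating-function/submultiplicativity packaging is a clean way to organize the paper's direct term-by-term estimate, and your continuity remark correctly handles $\epsilon=0$ where $\rho=R/|\epsilon|$ is undefined.
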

\begin{proof} We show that $F(\epsilon,a)-a^\circ$ is given by a power
  series in $\epsilon,a_0-1,a_n,\bar a_n$ which converges for
  $|\epsilon|<R$, $\|a-a^\circ\|_R<\delta$, namely that the series
  \[
    \|F(\epsilon;a) - a^\circ\|_R =
    |F_0(\epsilon;a)-1|+\sum_{n=1}^\infty|F_n(\epsilon;a)|R^n.
  \]
  converges absolutely in the specified domain.

  Let $F(\epsilon;a)-a^{\circ}=A+B$ where $A$ is the part involving
  $c_{r,s}$ and $B$ is the quadratic, $c_{r,s}$-independent part.  In
  $|F_0(\epsilon;a)-1|$ we use that $1/2\leq 1$ to obtain a single
  summation for $A$:
  \begin{align*}
    \|A\|_R\leq&\sum_{r+s>0} |c_{r,s}|
                 \sum_{p,q}|\epsilon|^{2s+2|q|}R^{r+|p|-s-|q|}
                 |a_{p_1}\cdots a_{p_{r+1}}\bar a_{q_1}\cdots\bar a_{q_{s+1}}|
    \\
    \leq& \sum_{r+s>0}|c_{r,s}| R^r(|\epsilon|^{2}/R)^s
          \|a\|_R^{r+1}\|a\|_{\epsilon^2/R}^{s+1}
    \\
    \leq& \sum_{r+s>0}|c_{r,s}| R^{r+s}
          \|a\|_R^{r+s+2}.
  \end{align*}
  In the last inequality we use $|\epsilon|\leq R$ so that
  $\|\ \|_{\epsilon^2/R}\leq\|\ \|_{R}$.  To estimate $B$ we set
  $a=a^\circ+b$:
  \begin{align*}
    \|B\|_R
    \leq&\frac12\sum_{k=0}^\infty|b_k|^2|\epsilon|^{2k}
          + \sum_{n=1}^\infty\sum_{k=0}^\infty |b_{n+k}|\, |b_k|R^n|\epsilon|^{2k}
    \\
    \leq&
          \frac12\sum_{k=0}^\infty|b_k|^2R^{2k}
          + \sum_{n=1}^\infty\sum_{k=0}^\infty |b_{n+k}|R^{n+k}\, |b_k|R^k
    \\
    =&\frac12\sum_{k=0}^\infty|b_k|^2R^{2k} 
       + \frac12\sum_{k\neq n} |b_{n}|R^{n}\, |b_k|R^k
    \\
    \leq& \frac12\|b\|_R^2
  \end{align*}
  Therefore, we have the estimate
  \[
    \|F(\epsilon;a)-a^\circ\|\leq \sum_{r+s>0}|c_{r,s}| R^{r+s}
    \|a\|_R^{r+s+2}+\frac12\|a-a^\circ\|_R^2.
  \]
  Assume that $\|a-a^\circ\|_R<\delta$. Then
  $\|a\|_R\leq \|a^\circ\|_R+\delta=1+\delta$ and
  \[
    \|F(\epsilon;a)-a^\circ\| < \sum_{r+s>0}|c_{r,s}|R^{r+s}
    (1+\delta)^{r+s+2} +\delta^2/2.
  \]
  Now choose $R$ so small that
  \[
    \sum_{r+s>0}|c_{r,s}|R^{r+s}(1+\delta)^{r+s+2}\leq\delta-\delta^2/2.
  \]
  Then $\|F(a)-a^\circ\|_R<\delta$.
\end{proof}

\begin{lemma}\label{l-4} With the notation of the previous Lemma,
  $R$ can be chosen so that $a\mapsto F(\epsilon;a)$ is a contraction
  of $B_{\delta}(a^\circ)\subset\ell^1_R(\mathbb C)$ for all
  $\epsilon\in\mathbb C$ such that $|\epsilon<R$.
\end{lemma}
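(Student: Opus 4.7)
The plan is to repeat the bookkeeping of Lemma \ref{l-3} one more time, now applied to the difference $F(\epsilon;a)-F(\epsilon;a')$ for $a,a'\in B_\delta(a^\circ)$, in order to bound the Lipschitz constant of $a\mapsto F(\epsilon;a)$ by a constant strictly less than $1$, uniformly in $\epsilon$ with $|\epsilon|<R$, after possibly shrinking $R$ further. As in that lemma I would split $F(\epsilon;a)-a^\circ=A(\epsilon;a)+B(\epsilon;a)$, where $A$ is the $c_{r,s}$-sum and $B$ is the $c_{r,s}$-independent remainder; substituting $a_n=\delta_{n,0}+b_n$ shows that $B$ is bilinear in $(b,\bar b)$ with no constant or linear part, which is the source of smallness on the ball.

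For the $A$-part I would apply the elementary telescoping identity
\[
u_1\cdots u_N-v_1\cdots v_N=\sum_{i=1}^{N}v_1\cdots v_{i-1}(u_i-v_i)u_{i+1}\cdots u_N
\]
to each multilinear monomial, treating the factors $a_{p_i}$ and $\bar a_{q_j}$ on the same footing, so that $N=r+s+2$. This splits the difference of each monomial into $r+s+2$ pieces, each of which has exactly one factor replaced by a component of $a-a'$ (or its conjugate) while the remaining factors are controlled in $\ell^1_R$ by $\max(\|a\|_R,\|a'\|_R)^{r+s+1}\le(1+\delta)^{r+s+1}$. Running the same chain of inequalities used to estimate $\|A\|_R$ in Lemma \ref{l-3}, with one factor of $\|a\|_R$ replaced by $\|a-a'\|_R$, and again using $|\epsilon|\le R$ to absorb $\|\ \|_{\epsilon^2/R}$ into $\|\ \|_R$, would yield
\[
\|A(\epsilon;a)-A(\epsilon;a')\|_R\le\Bigl(\sum_{r+s>0}(r+s+2)|c_{r,s}|R^{r+s}(1+\delta)^{r+s+1}\Bigr)\|a-a'\|_R.
\]
An analogous bilinear telescoping applied to $B$, combined with $\|b\|_R,\|b'\|_R<\delta$, would give $\|B(\epsilon;a)-B(\epsilon;a')\|_R\le 2\delta\,\|a-a'\|_R$.

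To finish, I would first fix $\delta$ small, say $\delta<1/8$, so that the $B$-contribution is at most $1/4$. The remaining series
\[
L_A(R):=\sum_{r+s>0}(r+s+2)|c_{r,s}|R^{r+s}(1+\delta)^{r+s+1}
\]
converges for sufficiently small $R$ because the polynomial weight $r+s+2$ only shrinks the radius of convergence of the analytic double series $\sum|c_{r,s}|\rho^{r+s}$ by a constant factor, and $L_A(R)\to 0$ as $R\to 0$. Choosing $R$ so that $L_A(R)\le 1/4$ gives a Lipschitz constant at most $1/2$, uniformly in $\epsilon$ with $|\epsilon|<R$. The only genuinely new point beyond Lemma \ref{l-3} is the combinatorial observation that the telescoping produces nothing worse than the polynomial factor $r+s+2$, which is absorbed by the real analyticity of $\mu$; this is the main, but rather mild, obstacle.
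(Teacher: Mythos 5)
Your argument is correct and is essentially the paper's: the paper bounds the operator norm of the differential $d_aF$ by differentiating each monomial term by term (producing the same combinatorial factor $r+s+2$) and arrives at exactly your dominant series $\sum_{r+s>0}(r+s+2)|c_{r,s}|R^{r+s}(1+\delta)^{r+s+1}$, of which your telescoping estimate is just the integrated (Lipschitz) form. The only, harmless, difference is that your bound $2\delta\|a-a'\|_R$ for the quadratic part forces you to shrink $\delta$ to $\delta<1/8$, whereas the paper's symmetrized estimate gives $\delta\|h\|_R$ and so covers every $\delta\in(0,1)$ as in Lemma \ref{l-3}; since the application to Theorem \ref{t-1} only requires some small $\delta$, this restriction is immaterial.
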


\begin{proof} It is sufficient to show that the differential has norm
  $<1$. Since $F$ is holomorphic we can compute the differential term
  by term. The result is that the differential
  \[
    d_aF(\epsilon;a)(h)=\lim_{t\to
      0}\frac1t(F(\epsilon;a+th)-F(\epsilon;a))
  \]
  is obtained by replacing in each monomial of the power series
  defining $F$ one occurence of $a_p$ or $\bar a_p$ by $h_p$ or
  $\bar h_p$ and multiplying by the degree.  We can then estimate the
  norm in the same way as in the previous lemma.  For
  $h\in \ell^1_R(\mathbb C)$,
  \begin{align*}
    \|d_aF_n(\epsilon;a)(h)\|_R&\leq \sum_{r+s>0}(r+s+2)|c_{r,s}|R^{r+s}
                                 \|a\|_R^{r+s+1}\|h\|_{R}
    \\
                               &+\|h\|_R\|a-a^\circ\|_{R}
    \\
                               &\leq \sum_{r+s>0}(r+s+2)|c_{r,s}|R^{r+s}
                                 (1+\delta)^{r+s+1}\|h\|_{R}
    \\
                               &+\delta\|h\|_R
  \end{align*}
  Since $\mu$ is real analytic, for any given $\delta\in(0,1)$ it is
  possible to choose $R>0$ such that
  \[
    \sum_{r+s>0}(r+s+2)|c_{r,s}|R^{r+s}(1+\delta)^{r+s+1}\leq\theta-\delta,
  \]
  with $\delta<\theta<1$. With this choice of $R$,
  $\|d_aF(\epsilon;a)\|_R\leq\theta$.
\end{proof}

\noindent{\em Proof of Theorem \ref{t-1} and Theorem \ref{t-2}}.  By
the Banach contraction principle there is a unique sequence
$a(\epsilon)\in B_{\delta}(a^\circ)$ obeying the fixed point equation
$a(\epsilon)=F(\epsilon;a(\epsilon))$.  Since
$\sum_{n=0}^\infty|a_n(\epsilon)|R^n<\infty$ and
$|a_0(\epsilon)-1|\leq\|a(\epsilon)-a^\circ\|_R<\delta<1$, the power
series $f_{\epsilon}(w)=w(a_0(\epsilon)+a_1(\epsilon)w+\cdots)$ is
absolutely convergent for $|w|<R$ and defines a univalent map for
$|\epsilon|<R_1$ for sufficiently small $R_1$ ($R_1=R/2$ will do).
Also $a_0(\epsilon)$ is real for real $\epsilon$ since $F_0$ is real;
because $|a_0(\epsilon)-1|<1$, it is positive and thus $f_\epsilon$ is
normalized.  By Lemma \ref{l-1} $f_\epsilon(w)$ is the required
univalent map.  By Lemma \ref{l-3} $F$ is holomorphic, and by Lemma
\ref{l-4} the differential $\mathrm{Id}-d_aF(\epsilon;a(\epsilon))$ at
$a(\epsilon)$ of the map $a\mapsto a-F(\epsilon;a)$ is an automorphism
of the Banach space $\ell^1_R(\mathbb C)$.  By the implicit function
theorem for analytic maps in Banach spaces, see \cite{HajekJohanis}
Ch.~1, Theorem 174, $a(\epsilon)$ is an analytic function of
$\epsilon$ and thus $f_\epsilon(w)$ is analytic as a function of
$\epsilon$ and $w$. It follows that the Riemann mapping
$h(\epsilon,z)=f_\epsilon^{-1}(z)$ is also holomorphic. The fact that
$h$ is an even function of $\epsilon$ follows most simply from the
invariance of the fixed point equation in Lemma \ref{l-1} under
$\epsilon\to-\epsilon$. This proves Theorem \ref{t-1}. Theorem
\ref{t-2} then follows from Lemma \ref{l-2}. \hfill{$\square$}

\section{Proof of Theorem \ref{t-88}}\label{s-5}
By using a partition of unity we may assume that $X$ is an open
neighbourhood of $0\in\mathbb C^n$ and that $D$ is defined by the
equation $z_1=0$ where $z_1$ is the first coordinate function on
$\mathbb C^n$. The main technical step is to settle the case where
$\alpha=\partial\gamma$ is exact:
\begin{lemma}\label{l-5.1}
Let $m\in\{0,1,2,\dots\}$. If $\alpha=\partial\gamma$ with
  $\gamma\in \Gamma_c(X,\mathcal A_X^{n-1,0}(mD))$ then, as
  $\epsilon\to 0$,
  \[
    \langle\alpha,\beta\rangle_{\lambda,\epsilon} = 2\int_D
    \mathrm{Res}_\partial(\partial\log\lambda\wedge\gamma) \wedge
    \overline{\mathrm{Res}\,\beta}+O(\epsilon)
  \]
\end{lemma}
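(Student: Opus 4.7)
The plan is to reduce, by Stokes's theorem, to a boundary integral over $\{\lambda=\epsilon\}$ and evaluate this integral asymptotically by means of the change of variables provided by Theorem \ref{t-1}, which locally replaces $\lambda$ by the modulus of a holomorphic local equation for $D$. On $X\smallsetminus D$ the form $\bar\beta$ is closed (it is antiholomorphic of top antiholomorphic degree) and $\bar\partial\gamma\wedge\bar\beta$ vanishes for bidegree reasons, so $d(\gamma\wedge\bar\beta)=\partial\gamma\wedge\bar\beta$ away from $D$. Since $\gamma$ has compact support in $X$, Stokes's theorem yields, up to a sign from the orientation convention,
\[
\int_{\lambda\geq\epsilon}\partial\gamma\wedge\bar\beta
\;=\; -\int_{\lambda=\epsilon}\gamma\wedge\bar\beta,
\]
so the task reduces to the $\epsilon\to 0$ asymptotics of this boundary integral.

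A partition of unity localizes the problem to a chart with coordinates $(z_1,z')$ in which $D=\{z_1=0\}$, $\beta=\beta_0(z)\,dz_1\wedge\cdots\wedge dz_n/z_1$ with $\beta_0$ holomorphic, and $\lambda=|z_1|\kappa(z)$ for some smooth $\kappa>0$. For each fixed $z'$, Theorem \ref{t-1} applied to $\mu(z_1)=\lambda(z_1,z')^2$ supplies a holomorphic-in-$z_1$ change of variable $z_1=f(\epsilon,z',w_1)$, smooth in $(\epsilon,z')$ including at $\epsilon=0$, mapping $\{|w_1|<\epsilon\}$ onto $\{z_1:\lambda(z_1,z')<\epsilon\}$. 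The real-analyticity hypothesis of Theorem \ref{t-1} is to be handled by approximating $\lambda^2$ in $z_1$ by a polynomial agreeing with $\lambda^2$ to sufficiently high order at $z_1=0$, since the $O(\epsilon)$ conclusion depends only on finitely many Taylor coefficients in $z_1$. Parametrizing by $w_1=\epsilon e^{i\theta}$ and $z'$ turns the $\theta$-integral into a contour integral on the circle $|w_1|=\epsilon$, on which $\bar w_1=\epsilon^2/w_1$.

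This contour integral extracts the coefficient of $1/w_1$ in the Laurent expansion of the integrand. Writing $\gamma = (A(z)/z_1^m)\,dz_2\wedge\cdots\wedge dz_n + dz_1\wedge\gamma''$ and Taylor-expanding $A$, $\beta_0$ and $f$ (together with its conjugate) in the variables $w_1,\bar w_1,\epsilon$ with parameters $z'$, this coefficient coincides with the Dolbeault residue of $\partial\log\lambda\wedge\gamma$ computed inductively as in Example \ref{e-2.1}. The factor $2$ in the lemma is accounted for by the identity $\partial\log\lambda = \tfrac12\,dz_1/z_1 + \partial\log\kappa$: the $\tfrac12$ arises from $\log|z_1|=\tfrac12\log(z_1\bar z_1)$, while the $\partial\log\kappa$ contribution generates precisely the lower-order corrections appearing in the inductive Laurent manipulation that defines $\mathrm{Res}_\partial$ on forms with poles of order $\geq 2$.

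I expect the principal obstacle to be the combinatorial bookkeeping required to match the Laurent expansion of the boundary integrand with the residue term by term, and to verify that every contribution orthogonal to the $1/w_1$ coefficient is absorbed into $O(\epsilon)$. In particular, the even-in-$\epsilon$ corrections to $f$ coming from Theorem \ref{t-1} must either recombine into the $\partial\log\kappa$ part of the residue or cancel in the limit. A secondary, delicate point is making rigorous the approximation of a smooth cut-off function $\lambda$ by one satisfying the real-analyticity hypothesis of Theorem \ref{t-1}, while preserving the $O(\epsilon)$ remainder throughout.
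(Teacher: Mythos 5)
Your overall strategy coincides with the paper's: Stokes's theorem reduces the claim to the boundary integral over $\{\lambda=\epsilon\}$; one localizes, replaces $\lambda^2$ by its Taylor polynomial in $z_1$ to high enough order to meet the real-analyticity hypothesis; and one changes variables fibrewise by the asymptotic Riemann mapping of Theorem \ref{t-1} to reduce to $\lambda=|z_1|$, where the residue is read off from a contour integral as in Example \ref{e-2.1}. This is correct and matches steps (a)--(c) of the paper's proof, which treat the component of $\gamma$ proportional to $dz_2\wedge\cdots\wedge dz_n$; your accounting of the factor $2$ via $\partial\log\lambda=\tfrac12\,dz_1/z_1+\partial\log\kappa$ is also consistent with how the paper's constants work out.

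The gap is in the component $dz_1\wedge\gamma''$. Your plan treats the boundary integral as a fibrewise one-variable contour integral extracting the coefficient of $1/w_1$, but that mechanism only applies to forms not containing $dz_1$. On the cycle $\{\lambda=\epsilon\}$ the coordinate $z_1$ is a function of $(\theta,z_2,\dots,z_n,\bar z_2,\dots,\bar z_n)$, so the restriction of $dz_1$ to the cycle has components along $dz_j$ and $d\bar z_j$ for $j\geq2$; the top-degree part of $dz_1\wedge\gamma''\wedge\bar\beta$ on the cycle therefore contains cross terms (for instance the $dz_j$-component of $dz_1$ paired with the $d\theta$-component of $d\bar z_1$ inside $\bar\beta$) that are invisible to the fibrewise Laurent coefficient. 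The paper disposes of this by a separate argument: first, since $\partial$-exact forms may be added to $\gamma$ without changing either side, Proposition \ref{p-1} allows one to assume the $dz_1$-part is $\frac{dz_1}{z_1}\wedge\sigma$ with $\sigma$ smooth; second, the relation $\partial\log\lambda+\bar\partial\log\lambda=0$ on the cycle is used to substitute $dz_1\equiv-\partial'\log\lambda/\partial_{z_1}\log\lambda$ modulo antiholomorphic differentials (which are killed by $\bar\beta$), yielding an integrand without $dz_1$ and with the smooth nonvanishing denominator $z_1\partial_{z_1}\log\lambda=1+z_1\partial_{z_1}\log h$. It is exactly this substitution that produces the $\partial'\log\lambda$ part of $\mathrm{Res}_\partial(\partial\log\lambda\wedge\gamma)$ for this component; attributing it to ``lower-order corrections in the inductive Laurent manipulation'' does not supply the argument. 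You need to add this reduction (or an equivalent one) for the proof to close.
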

\begin{proof}  We first reduce the general case to the case where
$\lambda^2$ is real analytic, and in fact a polynomial. Let $\lambda$
be a cut-off function and $\lambda_N$ be the cut-off function such
that  $\lambda_N^2$ is the degree $N$ Taylor polynomial in the variable
${z_1}$ at ${z_1}=0$. We show that for fixed $\alpha,\beta$ and $N$ large enough,
\[
\langle\alpha,\beta\rangle_{\lambda,\epsilon}=
\langle\alpha,\beta\rangle_{\lambda_N,\epsilon}+O(\epsilon)
\]
For small $\epsilon$, 
$U_\epsilon=\lambda^{-1}([0,\epsilon))\cap\mathrm{supp}\,\alpha$ 
is contained in the region $|z_1|<C_1\epsilon$ for some $C_1>0$
and thus
$|\lambda_N(z)-\lambda(z)|<C_2\epsilon^{N+1}$ for all $z\in U_{\epsilon}$ and
some $C_2>0$. 
It follows that the level set $\lambda_N^{-1}(\epsilon)$ is contained
in $\lambda^{-1}([\epsilon-C\epsilon^{N+1},\epsilon+C\epsilon^{N+1}]$.
The difference between the integrals over $\lambda(z)>\epsilon$ and
$\lambda_N(z)>0$ is then estimated by the volume of the region,
which is less than $\mathrm{const}\,\epsilon^{N}$ times the maximum
of the integrand, which is less than $\mathrm{const}\,\,\epsilon^{-m-1}$
if $\alpha$ has a pole of order $m$. For $N>m-1$ the difference
vanishes in the limit $\epsilon\to  0$.

So we assume from now on that $\lambda^2$ is a real analytic function
of $z$.

 By Stokes's theorem,
\begin{align*}
\langle\partial\gamma,\beta\rangle_{\lambda,\epsilon}
&=\frac{(-1)^{n}}{2\pi i}\int_{\lambda=\epsilon}\gamma\wedge\bar\beta
\end{align*}
We prove the identity in several steps.
\subsection*{(a) The one-dimensional case with $\lambda(z)=|z|$.} 
Suppose $n=1$ and write $z_1=z$. 
Then $\gamma(z)=z^{-m}\sigma(z)$ for some smooth
function $\sigma$ and $\bar\beta=\bar b(\bar z)d\bar z/\bar z$ for
some holomorphic function $\bar b$ defined around $0$. Let us first
assume that $\lambda(z)=|z|$ and let
$\sigma(z)\sim\sum_{r,s=0}^\infty\sigma_{rs}z^r\bar z^s$ be the Taylor
series of $\sigma$. Then
\[
  \int_{|z|=\epsilon} \gamma\wedge\bar\beta = -\sum_{r+s\leq N}
  \sigma_{r,s}\epsilon^{r+s-m}\int_{|u|=1}u^{r-s-m}\bar b(\epsilon
  u^{-1})\frac{du}{u}+O(\epsilon),
\]
for $N$ sufficiently large, say $N\geq m+1$. Thus we have to evaluate
\[
  \epsilon^{r+s-m}\int_{|u|=1}u^{r-s-m}(\epsilon u^{-1})^l\frac{du}u
  =2\pi i \delta_{r,s+m+l}\epsilon^{2s+2l},
\]
for $r,s,m,l\geq0$. Thus only one term in the sum and only the leading
term $\bar b(0)$ survive the limit $\epsilon\to0$.  Therefore
\[
  \int_{|z|=\epsilon} \gamma\wedge\bar\beta=-2\pi
  i\,\sigma_{m,0}\,\bar b(0)+O(\epsilon).
\]
On the other hand, $b(0)=\mathrm{Res}\,\beta$ and
\[
  \mathrm{Res}_\partial\left(\partial\log\lambda\,\gamma\right)
  =\mathrm{Res}_\partial\,\frac{dz}z\gamma(z) =\sigma_{m,0},
\]
as computed in Example \ref{e-2.1}. The proof is complete.

\subsection*{(b) The one-dimensional case with general $\lambda$.}
Let again $n=1$, but with a general cut-off function $\lambda$.  By
Theorem \ref{t-1}, for sufficiently small $\epsilon$, the normalized
biholomorphic Riemann map $z=g_\epsilon(w)$ sending the disk
$|w|<\epsilon$ to the domain $\lambda(z)<\epsilon$ extends to a smooth
parametrization of the boundary and has an analytic continuation to a
holomorphic function of $\epsilon,z$ defined on a polydisk around
zero. We change variables to reduce to case (a):
\[
  \int_{\lambda(z)=\epsilon}\gamma\wedge\bar\beta =
  \int_{|w|=\epsilon}g_\epsilon^*\gamma
  \wedge\overline{g_\epsilon^*\beta}.
\]
In this formula we may replace $g_\epsilon$ by its limit for
$\epsilon=0$, since the difference is $\epsilon$ times an integral of
the form considered in (a) which has a finite limit as
$\epsilon\to 0$.  The required identity
\[
  \frac{-1}{2\pi i}\int_{\lambda(z)=\epsilon}\gamma\wedge\bar\beta
  =\mathrm{Res}_\partial(\partial\log\lambda\,\gamma)\wedge
  \overline{\mathrm{Res}\,\beta}+O(\epsilon)
\]
thus holds in the coordinate $w=g_0(z)$ and thus in any coordinate system.
\subsection*{(c) The case of arbitrary dimension I} Now let $n$ be
arbitrary and suppose that
\[
  \gamma= f dz_2\wedge\cdots\wedge dz_n
\]
for some function $f$.  
Then the integration over each fibre of the projection
$z\to (z_2,\dots,z_n)$ is of the form considered in
(b). The claim then follows from (b), (the sign comes from permuting
$d\bar z_1$ through $dz_2\wedge\cdots\wedge dz_n$.)

\subsection*{(d) The case of arbitrary dimension II}
Now we assume that 
\[
  \gamma=\frac {d{z_1}}{{z_1}}\wedge\sigma
\]
for some smooth $(n-2)$-form $\sigma$. On the cycle $\lambda=\epsilon$
we have the identity
\[
  \partial\log\lambda+\bar\partial\log\lambda=0.
\]
Let us again introduce coordinates $z_1,z_2,\dots,z_n$ and decompose
the differential as $\partial=d{z_1}\partial_{z_1}+\partial'$. The identity becomes
\[
  d{z_1}\,\partial_{z_1}\log\lambda+\partial'\log\lambda \equiv 0\mod \langle
  d\bar z_1,\dots,d\bar z_n\rangle
\]
modulo the submodule generated by $d\bar z_i$, which doesn't contribute when
we multiply by the $(0,n)$-form $\bar \beta$. 
Thus
\[
  \int_{\lambda=\epsilon}\gamma\wedge\bar\beta =
  -\int_{\lambda=\epsilon}\frac{\partial'\log
    \lambda}{{z_1}\,\partial_{z_1}\log\lambda} \wedge\sigma\wedge\bar\beta.
\]
The denominator ${z_1}\partial_{z_1}\log\lambda$ is actually smooth and
non-zero on $D$, since it can be written as $1+{z_1}\partial_{z_1}\log h$,
where $\lambda=h|{z_1}|$ with smooth nonzero $h$.

Now the integral is of the form considered in (c) for which we have
proved the claim. Thus
\[
  \frac{(-1)^{n}}{2\pi
    i}\int_{\lambda=\epsilon}\gamma\wedge\bar\beta=-
  \int_D\mathrm{Res}_\partial\left(\partial\log\lambda\wedge\frac{\partial'\log
      \lambda}{{z_1}\partial_{z_1}\log\lambda}
    \wedge\sigma\right)\wedge\overline{\mathrm{Res}\,\beta}+O(\epsilon).
\]
Since $\partial'\log\lambda\wedge\sigma$ is an $(n-1,0)$-form not involving
$d{z_1}$, we may replace $\partial\log\lambda$ by $d{z_1}\partial_{z_1}\log\lambda$ on the
right-hand side and we obtain
\begin{align*}
  \frac {(-1)^{n}} {2\pi i}
  \int_{\lambda=\epsilon} 
  \gamma \wedge \bar\beta&=-
                           \int_D\mathrm{Res}_\partial \left( \frac{d{z_1}}{{z_1}} 
                           \wedge
                           \partial'\log \lambda
                           \wedge
                           \sigma\right)
                           \wedge
                           \overline{\mathrm{Res}\,\beta}+O(\epsilon)
  \\
                         &=-
                           \int_D\mathrm{Res}_\partial 
                           \left(
                           \frac{d{z_1}}{{z_1}}
                           \wedge
                           \partial\log \lambda
                           \wedge
                           \sigma\right)
                           \wedge
                           \overline{\mathrm{Res}\,\beta}+O(\epsilon)
  \\
                         &=
                           \int_D\mathrm{Res}_\partial
                           \left(
                           \partial\log \lambda
                           \wedge
                           \gamma\right)
                           \wedge
                           \overline{\mathrm{Res}\,\beta}+O(\epsilon),
\end{align*}
so that the claim also holds in this case.

\subsection*{(e) The case of arbitrary dimension III}
We finally deal with the most general case. The form $\gamma$ may
be written as
\[
  \gamma=fdz_2\wedge\cdots\wedge dz_n+{dz_1}\wedge\sigma
\]
for some function $f$ and an $(n-2)$-form $\sigma$ such that $z_1^mf$
and $z_1^m\sigma$ are smooth.  Since we can add a $\partial$-closed
form to $\gamma$ without changing either side of the claim, we may by
Prop.~\ref{p-1} assume that $\sigma$ has a first order pole. Indeed if
$\sigma$ has a pole of order $p+1>1$, say $\sigma=z_1^{-p-1}\tau$,
then
$dz_1\wedge\sigma=\frac1pz_1^{-p}\partial\tau-\partial(\frac1pz_1^{-p}\,\tau)$. Thus
we can lower the order of the pole by adding exact forms until
$\sigma$ has a pole of first order.  In this way, the general case is
reduced to the cases (c) and (d).
\end{proof}

\noindent{\it Proof of Theorem \ref{t-88}.}
(i) By Lemma \ref{l-5.1} and Prop.~\ref{p-1}, it is sufficient to
consider the case where $\alpha$ has a simple pole. Write
\[
  \alpha=f\,\frac{dz_1}{z_1}\wedge dz_2\wedge\cdots\wedge dz_n,\quad
  \beta=g\,\frac{dz_1}{z_1}\wedge dz_2\wedge\cdots\wedge dz_n,
\]
where $f$ is a function with compact support and $g$ is holomorphic.
We may assume that the support of $f$ is contained in the region
$|z_1|<1$. In the integral of $\alpha\wedge\bar \beta$ we may
replace $f$ and $\bar g$ by their values at $z_1=0$, as the difference is a
regularized absolutely convergent integral, whose limit as $\epsilon\to0$
is independent of the choice of $\lambda$. We obtain
\[
  \int_{|z_1|\leq1,\lambda\geq\epsilon}\alpha\wedge\bar\beta=(-1)^{n-1}
  \int_D\int_{|z_1|\leq1,\lambda\geq\epsilon}\frac {dz_1\wedge d\bar z_1}
  {z_1\bar z_1}\wedge \mathrm{Res}\,\alpha\wedge\overline{\mathrm{Res}\,\beta}+\cdots,
\]
where the dots denote a term whose limit as $\epsilon\to 0$ exists and is
independent of $\lambda$. We are left with a two-dimensional integral over
$z_1$
which we evaluate in polar coordinates: 
let $r=r_\epsilon(\theta)=\epsilon/h(0)(1+O(\epsilon))$ the polar
parametrization of the curve $\lambda(z_1)=\epsilon$ with $\lambda(z_1)=|z_1|h(z_1)$. Then
\begin{align*}
\int_{|z_1|\leq1,\lambda\geq\epsilon}\frac {dz_1\wedge d\bar z_1}
  {z_1\bar z_1}&=-2i\int_0^{2\pi}\int_{r_\epsilon(\theta)}^1\frac{dr}rd\theta
\\
&= 2i\int_{0}^{2\pi}\log\,r_\epsilon(\theta)\,d\theta
\\
&=4\pi i\log(\epsilon/h(0))+O(\epsilon). 
\end{align*}
It follows that 
\begin{equation}\label{e-99}
\langle\alpha,\beta\rangle_{\lambda,\epsilon}=2\,
 \int_D\log\left(\frac\epsilon{h|_D}\right)\mathrm{Res}\,\alpha\wedge\overline{\mathrm{Res}\,\beta}+\cdots,\qquad \lambda=h|z_1|,
\end{equation}
up to a finite term whose limit as $\epsilon\to 0$ is independent of $\lambda$. Since $\mathrm{Res}$ coincides with $\mathrm{Res}_\partial$ for $(n,0)$-forms
with simple pole, the proof is complete.

(ii) If $\alpha$ has a first order pole, the claim follows from
\eqref{e-99}: if we multiply $\lambda$ by $\exp\varphi$, then
$\log\,h|_D$ changes by $\varphi|_D$, and
$\varphi\,\mathrm{Res}_\partial\,\alpha=\mathrm{Res}_\partial(\varphi\alpha)$
in the case of first order pole.  By Prop.~\ref{p-1}, a general form
$\alpha$ is a the sum of a $\partial$-exact form and a logarithmic
form. It is thus sufficient to check the claim when
$\alpha=\partial\gamma$ is $\partial$-exact, for which we use Lemma
\ref{l-5.1}. Since $\mathrm{Res}_\partial$ vanishes on
$\mathrm{Im}\,\partial$, we have
\begin{align*}
  \langle\partial\gamma,\beta\rangle_{e^\varphi\lambda,\epsilon}
  -\langle\partial\gamma,\beta\rangle_{\lambda,\epsilon} 
  &=
    2\int_D\mathrm{Res}_\partial(\partial\varphi\wedge\gamma)
    \wedge\overline{\mathrm{Res}\,\beta}+O(\epsilon)
  \\
  &=
    -2\int_D\mathrm{Res}_\partial(\varphi\,\partial\gamma)
    \wedge\overline{\mathrm{Res}\,\beta}+O(\epsilon).
\end{align*}
Thus the claim also holds for $\partial$-exact $\alpha$, completing
the proof
\hfill $\square$

\end{document}